\documentclass{amsart}

\usepackage{packages, formatting, notation}

\title{Serre Functors and Local Duality for Affine Quotients}
\author{Ivan Noden}
\date{}

\begin{document}
  \maketitle

  \setcounter{tocdepth}{1}
  \tableofcontents

  \sec{Introduction}

  \ssec{Overview}
  The purpose of this short note is to study Serre functors
  of categories of the form \(\QCoh(\Y)\)
  where \(\Y = \Spec A/G\) is the quotient of
  a reductive group \(G\) acting on \(\Spec A\) with a
  unique closed orbit.

  \sssec{}

  Recall, if \(\C\) is a compactly generated DG category then there exists
  a continuous functor \(\Se_\C: \C \to \C\) defined by the
  property that
  \begin{displaymath}
    \dual{\Hom_{\C}(x,y)} \simeq \Hom_{\C}(y, \Se_\C(x))
  \end{displaymath}
  for all \(x, y \in \C\) compact. The functor \(\Se_\C\)
  is known as the Serre functor of \(\C\).

  Such functors have been studied since the classical
  theorems of Serre and Grothendieck and their
  generalisation to triangulated categories is seen
  in works such as that of Bondal--Kapranov \cite{bondal_kapranov:1990}.
  Traditionally, in these instances, it is required \(\Se_\C\)
  be an equivalence but this assumption is often
  dropped in the DG setting where such functors have been
  studied and used to great effect by, for instance,
  Beraldo in \cite{beraldo:2021} and Gaitsgory--Yom Din in
  \cite{gaitsgory_yom_din:2017}.

  \sssec{}

  When \(\C = \QCoh(\Y)\) for \(\Y\) as above,
  the Serre functor is given by tensoring
  with an object \(\S_\Y \in \QCoh(\Y)\).
  We find that \(\S_\Y\)
  is the local cohomology
  of \(\omega_\Y\) at the unique closed
  orbit. Using this, we are able to
  develop analogues of Matlis and local
  duality.

  In the remainder of this introduction,
  we recall the classical theorems
  of local and Matlis duality 
  in order to highlight parallels with our results on affine quotients.
  After this, we briefly
  study the specific example of \(\Y \simeq \A^1/\G_m\)
  to illustrate these connections before stating our main results.

  \sssec{}

  Suppose \((A, \fm, k)\) is a local
  ring and \(I\) is the injective hull of \(k\) in the
  abelian category of \(A\)--modules. Then Matlis proves that,
  for an \(A\)--module \(M\),
  if \(M\) is finitely generated, \(\usHom_A(M, I)\)
  is Artinian. Conversely, if \(M\) is Artinian, then \(\usHom_A(M, I)\)
  is finitely generated. Furthermore, if \(M\) is
  either finitely generated or Artinian, \(M \simeq \usHom_A(\usHom_A(M,I), I)\).

  One notes that, if \(M\) is finitely generated then
  its local cohomology modules \(H^i_\fm M\) are Artinian.
  Thus, if we let \(\Gamma_\fm\) denote the derived
  local cohomology functor, the functor
  \(\uHom_A(-, I)\) defines an
  involutive equivalence between
  \(\Coh(\Spec A)\) and \(\Gamma_\fm(\Coh(\Spec A))\).

  The local duality theorem of Grothendieck
  then tells us that the functor \(\uHom_A(\Gamma_\fm(-), I)\)
  is representable by the normalised dualising complex \(\omega_A\)
  of \(A\).

  The connection with Serre functors is that, when
  the category \(\QCoh(\Spec A)\) is \emph{reflexive}
  (a condition which ensures the Serre functor is nicely behaved),
  then the Serre functor on \(\QCoh(\Spec A)\) is given
  by tensoring with \(I\).

  \sssec{}

  Let us now consider a baby example of the affine quotient case.
  We take \(\Y = \A^1/\G_m\) with the usual dilation action
  of \(\G_m\) on \(\A^1\). Put \(\pi: \Y \to B\G_m\)
  as the natural map and note that \(\QCoh(\Y)\) is
  compactly generated by \(\pi^\ast k_n\) where \(k_n\)
  is the representation of \(\G_m\) with highest weight
  \(n \in \Z\). As the compact objects of \(\QCoh(\Y)\)
  are dualisable, the Serre functor is given by tensoring
  with \(\S \in \QCoh(\Y)\) characterised by the property
  that
  \begin{displaymath}
    \Hom_{\QCoh(\Y)}(\pi^\ast k_n, \S) \simeq \dual{\Hom_{\QCoh(\Y)}(\O_\Y, \pi^\ast k_n)} \simeq \dual{\Hom_{\Rep(G)}(k_0, \pi_\ast \pi^\ast k_n)}
    \simeq
    \begin{cases}
      \dual{(k \cdot x^n)}, & \text{if } n \geq 0, \\
      0, & \text{otherwise.}
    \end{cases}
  \end{displaymath}
  
  One can then observe that
  \begin{displaymath}
    \S \simeq \cone(\pi^\ast k_1 \to \colim_n \pi^\ast k_{-n}).
  \end{displaymath}
  This object has a geometric interpretation.
  Let \(j: \punct{\Y} \hto \Y\) be the obvious 
  open immersion corresponding to \(\A^1 - \{0\}\)
  and define \(\QCoh(\Y)_0\) to be the full
  subcategory given by the kernel of \(j^\ast\).
  The embedding \(\QCoh(\Y)_0 \hto \QCoh(\Y)\)
  admits a right adjoint:
  \begin{displaymath}
    \Gamma_0 = \ker(\id_{\QCoh(\Y)} \to j_\ast j^\ast).
  \end{displaymath}
  We remark that \(\Gamma_0 \F \simeq \Gamma_0 \O_\Y \otimes \F\)
  by the projection formula.
  Observe then that \(\S \simeq \Gamma_0(\pi^\ast k_1 [1])\).
  Interestingly, \(\pi^\ast k_1 \simeq \Psi(\omega_\Y)\)
  as \(\Omega_{\A^1} \simeq \O_{\A^1} \cdot \mathrm{d}x\)
  and \(x\) has weight \(1\).
  Thus,
  \begin{displaymath}
    \S \simeq \Gamma_0 \Psi(\omega_\Y)[1].
  \end{displaymath}

  \sssec{}

  By its defining property \(\uHom_{\QCoh(\Y)}(\S, \S) \simeq \O_\Y\)
  and since, \(\Coh(\Y) \simeq \Perf(\Y)\), for all \(\F \in \Coh(\Y)\),
  \(\uHom_{\QCoh(\Y)}(\uHom_{\QCoh(\Y)}(\F, \S), \S) \simeq \F\).
  Furthermore, for \(\F \in \Coh(\Y)\),
  we have
  \begin{displaymath}
    \uHom_{\QCoh(\Y)}(\F, \S) \simeq \dual{\F} \otimes \S \simeq \Gamma_0(\Psi(\omega_\Y) \otimes \dual{\F})[1]
  \end{displaymath}
  and
  \begin{displaymath}
    \uHom_{\QCoh(\Y)}(\Gamma_0\F, \S) \simeq \uHom_{\QCoh(\Y)}(\F \otimes \dual{\Psi(\omega_\Y)} \otimes \S[-1], \S) \simeq \dual{\F} \otimes \Psi(\omega_\Y)[1].
  \end{displaymath}
  Thus, \(\uHom_{\QCoh(\Y)}(-, \S_\Y)\)
  defines an involutive equivalence between
  \(\Gamma_0(\Coh(\Y))\) and \(\Coh(\Y)\)

  \ssec{Main results}

  While the example of \(\A^1/\G_m\) relied on the smoothness of \(\A^1\),
  this assumption is not necessary. What we really use is
  the existence of a unique closed orbit under the \(\G_m\)
  action. Let us spell out the more general case.

  \sssec{}
  Let \(G\) be a reductive group acting on the classical affine scheme
  \(\Spec A\) of finite type. Consider the stack \(\Y = \Spec A/G\).
  The category \(\QCoh(\Y)\) is proper (that is,
  \(\Hom_{\QCoh(\Y)}(\F_1, \F_2)\) has finite dimensional
  total cohomology whenever \(\F_1\) and \(\F_2\) are
  compact) if \(\Spec A\) has finitely many closed orbits
  under the action of \(G\). We focus specifically
  on the case where there is a unique closed orbit,
  in which case \(\Y \simeq \Spec B/H\) where
  \(\Spec B\) has a unique closed orbit under the
  action of reductive \(H\) and, furthermore,
  this closed orbit is a fixed point.
  Therefore, we will assume \(\Spec A\) has
  a unique closed orbit which is a fixed point.

  Not only is \(\QCoh(\Y)\) proper but its compact
  objects are dualisable so the Serre functor
  is given by tensoring with some \(\S_\Y \in \QCoh(\Y)\).

  \sssec{}

  Let \(i: BG \hto \Y\) be the closed immersion of the
  fixed point and \(j: \punct{\Y} \hto \Y\) its open complement.
  The full subcategory of \(\QCoh(\Y)\) given by the kernel
  of \(j^\ast\) is denoted \(\QCoh(\Y)_{BG}\). It is
  the subcategory of quasicoherent sheaves
  set-theoretically supported at the fixed point.

  The embedding \(\QCoh(\Y)_{BG} \hto \QCoh(\Y)\)
  admits a right adjoint, denoted \(\localcoh\)
  and called the \emph{local cohomology functor}.
  Explicitly,
  \begin{displaymath}
    \localcoh \F \simeq \ker(\F \to j_\ast j^\ast \F).
  \end{displaymath}
  By the projection formula, \(\localcoh \F \simeq \localcoh \O_\Y \otimes \F\).

  \sssec{}

  Our main result is an explicit description of \(\S_\Y\):

  \begin{mainthm}\label{thm:serre_functor}
    We have \(\S_\Y \simeq \localcoh \Psi(\omega_\Y)[\dim G]\).
  \end{mainthm}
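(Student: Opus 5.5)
Both sides are continuous functors tensoring with an object, and \(\QCoh(\Y)\) is compactly generated by dualisable objects (e.g.\ \(\pi^\ast V\) for \(V \in \Rep(G)\) finite dimensional, with \(\pi: \Y \to BG\)). So it suffices to produce, functorially in compact \(\F\), an identification
\begin{displaymath}
  \Hom_{\QCoh(\Y)}(\F, \localcoh \Psi(\omega_\Y)[\dim G]) \simeq \dual{\Gamma(\Y, \F^\vee)} = \dual{\Hom_{\QCoh(\Y)}(\F, \O_\Y)^{\mathrm{op}}}.
\end{displaymath}
Since \(\localcoh\) is right adjoint to the inclusion of \(\QCoh(\Y)_{BG}\), the left side equals \(\Hom(\F \otimes \localcoh\O_\Y, \Psi(\omega_\Y))[\dim G]\). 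Replacing \(\F\) by its dual, the claim becomes
\begin{displaymath}
  \Gamma(\Y, \localcoh \F) \text{ is dual to } \Gamma(\Y, \F^\vee \otimes \omega_\Y)[\dim G],
\end{displaymath}
i.e.\ a form of Grothendieck local duality on the stack.

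To prove this, I would pass to \(X = \Spec A\) via \(p: X \to \Y\), a smooth cover of relative dimension \(\dim G\), so \(\omega_\Y\) pulls back to \(\omega_X[-\dim G]\) up to the character of the adjoint representation. Since \(G\) is reductive, \(\Gamma(\Y, -) = (\Gamma(X, p^\ast -))^G\) is exact on the heart. Writing \(\localcoh\) on \(X\) as local cohomology at the fixed point \(x_0\), i.e.\ supported on \(V(A^G_+ A)\), we have \(\localcoh \F\) computed by a Čech complex in invariant functions \(f_i \in A^G_+\) cutting out the closed orbit set-theoretically. Here I use that a unique closed orbit makes \(V(A^G_+)\) equal to the nullcone, whose only closed point is \(x_0\). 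Then \(\Gamma(\Y, \localcoh \F) = \Gamma_{\fm}(M)\) where \(M = \Gamma(X, p^\ast \F)^G\) is a finitely generated module over the graded/local ring \(A^G\) with \(\fm = A^G_+\). Since \(G\) is reductive, invariants commute with the Čech complex built from invariant \(f_i\).

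Next I would apply Grothendieck local duality for the finitely generated \(A^G\)-module \(M\), or rather its \(\fm\)-completion, to identify \(\dual{\Gamma_\fm M}\) with \(\Hom_{A^G}(M, \omega_{A^G})\), using that \(\Gamma_\fm M\) has finite-dimensional cohomology pieces by properness. Then relate \(\omega_{A^G}\) to \(\omega_\Y\) by Grothendieck duality for the good moduli map \(q: \Y \to \Spec A^G\). Here \(q_\ast\) is exact and \(q^! \omega_{A^G} \simeq \omega_\Y\), so \(\Hom_{A^G}(q_\ast \F, \omega_{A^G}) \simeq \Hom_\Y(\F, \omega_\Y)\) up to the shift, where the \([\dim G]\) enters through the normalisation \(\Psi(\omega_\Y)\) coming from \(X\).

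The main obstacle is this last duality for the good moduli map \(q\). It is not proper representable, so \(q^!\) and its compatibility with \(\omega_\Y\) (and the precise shift \(\dim G\)) need justification. I would first try to establish it directly: show that \(q_\ast\) has a continuous right adjoint on ind-coherent sheaves, and check \(q^!\omega_{A^G} = \omega_\Y\) by comparing with \(X \to \Spec A^G \), which is affine, hence \(\omega_X = \Hom_{A^G}(A, \omega_{A^G})\), and then descending along \(p\). A secondary subtlety is the completion: local duality is cleanest over a complete local ring, so I would use that \(\Gamma_\fm M\) only sees \(\widehat{A^G}\), and that finitely generated modules are unchanged in the \(\Hom\) after completion.
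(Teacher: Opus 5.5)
There is a genuine gap: both load-bearing steps, computing \(\localcoh\) through invariant functions and Grothendieck duality for your good moduli map \(q\colon \Y \to \Spec A^G\), are false here, not merely unjustified. First, invariant functions cannot cut out the fixed point. An invariant function is constant on orbit closures, and with a unique closed orbit \(\{x_0\}\) every orbit closure contains \(x_0\). So every \(f \in A^G_+\) vanishes at every point and is nilpotent. Thus \(A^G\) is Artinian local (compare the paper's properness proposition), and \(V(A^G_+ A)\) is all of \(\Spec A\): the nullcone has a unique closed \emph{orbit}, not a unique closed point. Your identification \(\Gamma(\Y, \localcoh \F) \simeq \Gamma_\fm(M)\) then reads \(\Gamma(\Y,\localcoh\F)\simeq\Gamma(\Y,\F)\). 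This already fails for \(\Y = \A^1/\G_m\) and \(\F = \O_\Y\): there \(\punct{\Y} \simeq \pt\), the restriction \(\Gamma(\Y,\O_\Y) \to \Gamma(\punct{\Y},\O_{\punct{\Y}})\) is the identity of \(k\), so \(\Gamma(\Y,\localcoh\O_\Y) = 0\). Any argument factoring through \(q\) collapses \(\Y\) onto a thickened point and cannot see \(\punct{\Y}\). Your opening reduction also has slips. The defining property is \(\Hom(\F,\S_\Y)\simeq\dual{\Gamma(\Y,\F)}\), not \(\dual{\Gamma(\Y,\F^\vee)}\). The adjunction gives \(\Hom(\mathcal{E},\localcoh\mathcal{H})\simeq\Hom(\mathcal{E},\mathcal{H})\) only for \(\mathcal{E}\in\QCoh(\Y)_{BG}\), so \(\Hom(\F,\localcoh\Psi(\omega_\Y))\) is not \(\Hom(\localcoh\F,\Psi(\omega_\Y))\). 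The correct reformulation is local duality \(\dual{\Gamma(\Y,\localcoh\cG)}\simeq\Hom(\cG,\Psi(\omega_\Y))[\dim G]\) for the coherent sheaves \(\cG=\F^\vee\otimes\Psi(\omega_\Y)\).

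Second, the duality for \(q\) is not a matter of constructing \(q^!\) carefully. Since \(A^G\) is finite dimensional, \(\omega_{A^G}\simeq\dual{A^G}\) and \(\Hom_{A^G}(q_\ast\F,\omega_{A^G})\simeq\dual{\Gamma(\Y,\F)}\simeq\Hom(\F,\S_\Y)\). So the right adjoint of \(q_\ast\) sends \(\omega_{A^G}\) to \(\S_\Y\) itself. Your claimed isomorphism with \(\Hom(\F,\omega_\Y)\) is therefore exactly the false statement \(\S_\Y\simeq\Psi(\omega_\Y)\) up to shift. For \(\A^1/\G_m\) and \(n\geq 2\), \(\dual{\Gamma(\Y,\O_\Y(n))}\neq 0\), while \(\Hom(\O_\Y(n),\Psi(\omega_\Y))\simeq\Gamma(\Y,\O_\Y(1-n))=0\). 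The \(!\)-pullback does satisfy \(q^!\omega_{A^G}\simeq\omega_\Y\), but it is not right adjoint to \(q_\ast\) because \(q\) is not proper. Likewise \(\Hom_{A^G}(A,\omega_{A^G})\not\simeq\omega_{\Spec A}\), because \(A\) is not finite over \(A^G\). This failure of properness is exactly what produces \(\localcoh\) in the answer.

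The missing idea is to detect the fixed point equivariantly, through the maximal \(G\)-ideal of \(A\). The paper does this in two steps:
\begin{enumerate}
  \item It shows \(\S_\Y\) is an injective, \(G\)-Artinian object of \(\heart{\QCoh(\Y)}\), using t-exactness of \(\Dmat\), Noetherianity of \(\Dmat(\S_\Y)\simeq\O_\Y\), and \(\Dmat(i_\ast\O_{BG})\simeq i_\ast\O_{BG}\). Hence \(\S_\Y\) is supported on \(BG\).
  \item It compares both sides on the generators \(i_\ast V\) via \(i^!\) and \(\Psi(\omega_{BG})\simeq\O_{BG}[-\dim G]\).
\end{enumerate}
A local-duality proof along your lines would instead need classical local duality over \(A_\fm\) for that \(G\)-ideal, plus an argument that taking \(G\)-invariants undoes the \(\fm\)-adic completion.
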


  The main idea behind the proof is showing that \(\S_\Y\)
  is supported entirely at the fixed point. From this,
  its defining property allows us to characterise it
  as above.

  We remark that this description for \(\S_\Y\) was
  found by Beraldo in \cite{beraldo:2021} in the
  specific case where \(\Spec A\) is the
  nilpotent cone in the Lie algebra of \(G\).

  \sssec{}

  Using this description, we are able to prove an analogue
  of Matlis and local duality. Writing
  \begin{displaymath}
    \Dmat = \uHom_{\QCoh(\Y)}(-, \S_\Y) \text{ and } \Dse = \uHom_{\QCoh(\Y)}(-, \Psi(\omega_\Y))
  \end{displaymath}
  we prove:

  \begin{mainthm}\label{thm:matlis_local_duality}
    For all \(\F\) in \(\Coh(\Y)\),
    \begin{displaymath}
      \Dmat(\localcoh \F) \simeq \Dse(\F)[\dim G] \text{ and } \Dmat(\F) \simeq \localcoh(\Dse(\F)) [\dim G],
    \end{displaymath}
    hence \(\Dmat\) defines involutive equivalences 
    \(\op{\Coh(\Y)} \to \localcoh(\Coh(\Y))\) and \(\op{\localcoh(\Coh(\Y))} \to \Coh(\Y)\).
  \end{mainthm}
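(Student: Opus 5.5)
We take Theorem~\ref{thm:serre_functor} as given, so that \(\S_\Y \simeq \localcoh \O_\Y \otimes \Psi(\omega_\Y)[\dim G]\), and reduce both isomorphisms to formal properties of \(\localcoh\) and of internal Hom. The basic inputs are:
\begin{itemize}
\item the projection formula \(\localcoh \F \simeq \localcoh \O_\Y \otimes \F\);
\item the fact that \(\localcoh\) is right adjoint to the fully faithful embedding \(\QCoh(\Y)_{BG} \hto \QCoh(\Y)\);
\item the identity \(\localcoh\O_\Y \otimes \localcoh \O_\Y \simeq \localcoh \O_\Y\) (idempotence, since \(\localcoh \O_\Y\) is already supported at \(BG\));
\item the compatibility \(\uHom(\G, \localcoh \H) \simeq \localcoh \uHom(\G,\H)\) for \(\G\) coherent.
\end{itemize}
The last one holds because \(\localcoh = \ker(\id \to j_\ast j^\ast)\) and \(\uHom(\G, j_\ast j^\ast \H) \simeq j_\ast \uHom(j^\ast \G, j^\ast \H) \simeq j_\ast j^\ast \uHom(\G,\H)\) for coherent \(\G\). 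It is here that the finiteness hypothesis is used.

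\emph{Second isomorphism.} For \(\F \in \Coh(\Y)\) we compute
\[
\Dmat(\F) = \uHom(\F, \localcoh \Psi(\omega_\Y))[\dim G] \simeq \localcoh \uHom(\F, \Psi(\omega_\Y))[\dim G] = \localcoh(\Dse \F)[\dim G],
\]
using the fourth input above with \(\G = \F\).

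\emph{First isomorphism.} We argue by adjunction. For any \(\G\),
\[
\Hom(\G, \uHom(\localcoh\F, \localcoh\H)) \simeq \Hom(\G\otimes \localcoh\F, \localcoh \H) \simeq \Hom(\G \otimes \localcoh \F, \H),
\]
because \(\G\otimes\localcoh\F \simeq \localcoh(\G\otimes\F)\) lies in \(\QCoh(\Y)_{BG}\) and \(\localcoh\) is right adjoint to its inclusion. This gives \(\uHom(\localcoh\F, \localcoh \H) \simeq \uHom(\localcoh \F, \H)\). Taking \(\H = \Psi(\omega_\Y)\), we get
\[
\Dmat(\localcoh\F) \simeq \uHom(\localcoh \O_\Y \otimes \F, \Psi(\omega_\Y))[\dim G] \simeq \uHom(\localcoh\O_\Y, \Dse\F)[\dim G].
\]
It remains to show \(\uHom(\localcoh \O_\Y, \M) \simeq \M\) for \(\M = \Dse \F \in \Coh(\Y)\). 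Applying \(\uHom(-,\M)\) to the fibre sequence \(\localcoh\O_\Y \to \O_\Y \to j_\ast\O_{\punct\Y}\), this is equivalent to \(\uHom(j_\ast \O_{\punct\Y}, \M) \simeq 0\), i.e.\ \(\M\) is local (derived complete) along \(BG\).

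\textbf{Main obstacle.} This completeness statement is the crux and the step I expect to be hardest. I would prove it by testing global sections against compact generators \(\pi^\ast V\), which reduces it to \(G\)-invariants of the \(A\)-module \(\Hom_A(A[f^{-1}]\text{-type Čech terms}, M)\). The plan is to use that the fixed point is the unique closed orbit, so that invariant functions generate a homogeneous ideal whose completion is detected on invariants and the relevant Hom vanishes. A more elegant alternative: take \(\F\) compact, so that by the defining property of the Serre functor \(\Dmat\) is computed by linear duals of finite-dimensional Hom spaces. One then checks the statement on generators \(\pi^\ast V\) via the computation of Theorem~\ref{thm:serre_functor}, extends to all of \(\Coh(\Y)\) using that \(\Coh(\Y)\) is generated by bounded truncations, and finally identifies \(\localcoh\O_\Y\) via \(\S_\Y\) to deduce \(\uHom(\S_\Y,\S_\Y)\simeq\O_\Y\).

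\emph{Involutivity.} The equivalences then follow by combining the two formulas with Grothendieck duality \(\Dse\circ\Dse\simeq\id\) on \(\Coh(\Y)\):
\[
\Dmat\Dmat(\F)\simeq \Dmat(\localcoh \Dse\F)[\dim G] \simeq \Dse\Dse \F \simeq \F,
\]
and similarly for \(\localcoh(\Coh(\Y))\).
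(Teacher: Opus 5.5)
Your reductions are correct and follow the paper. The second isomorphism is the paper's argument in different packaging. The paper proves \(j_\ast j^\ast\Dse(\F)\simeq\uHom(\F,j_\ast j^\ast\Psi(\omega_\Y))\) by a way-out argument, and that is exactly your fourth input. That input needs \(\mathcal{H}\) bounded below, not just \(\mathcal{G}\) coherent; this is fine for \(\mathcal{H}=\Psi(\omega_\Y)\). For the first isomorphism you reduce, as the paper does, to \(\uHom(j_\ast\O_{\punct{\Y}},\mathcal{M})\simeq0\) for \(\mathcal{M}\in\Coh(\Y)\). This is equivalent to the paper's key claim \(\Dse(j_\ast\O_{\punct{\Y}})\simeq0\). (In your involutivity line the shift should be \([-\dim G]\), since \(\Dmat\) is contravariant.) But that vanishing is the whole content of local duality here, and you do not prove it. Neither sketch works, so there is a genuine gap.

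\textbf{Plan 1.} \(A^G\) is finite-dimensional and local, so \(\fm\cap A^G\) is nilpotent and invariant functions do not cut out the fixed point. For non-abelian \(G\), \(\fm\) need not be generated by semi-invariants; for example, take \(\mathrm{SL}_2\) on the nilpotent cone, where \(A^G=k\). Then the localisations \(A[f^{-1}]\) are not \(G\)-equivariant, and there is no \v{C}ech complex in \(\QCoh(\Y)\) to test against \(\pi^\ast V\). The statement is also genuinely equivariant: non-equivariantly, \(\mathrm{Ext}^1_{k[x]}(k[x,x^{-1}],k[x])\neq0\). So some finiteness of isotypic components must enter, and your mechanism never supplies it.

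\textbf{Plan 2.} This is circular. For \(\F=\pi^\ast V\) the claim is the crux itself. The computation in the proof of \Cref{thm:serre_functor} only shows that \(\Dmat(\localcoh\O_\Y)\) and \(\Psi(\omega_\Y)[\dim G]\) agree after applying \(\uHom(i_\ast V,-)\). That functor detects isomorphisms only between coherent objects (\Cref{prop:k_generates}). Also, \(\uHom(\S_\Y,\S_\Y)\simeq\O_\Y\) is an input coming from reflexivity, not a conclusion.

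\textbf{What is missing.} The missing step is precisely coherence. The paper proves \(\Dmat(\localcoh\O_\Y)\in\Coh(\Y)\) in two parts:
\begin{itemize}
\item \Cref{lem:dloc_o_fg}: each \(H^b(\localcoh\O_\Y)\) is \(G\)-Artinian, so it embeds in \(\S_\Y\otimes\pi^\ast V\), because \(\S_\Y\) is the injective hull of \(i_\ast\O_{BG}\). Hence its dual is finitely generated.
\item \Cref{lem:fid}: \(\Dmat(\localcoh\O_\Y)\) has bounded amplitude.
\end{itemize}
It follows that \(\Dse(j_\ast\O_{\punct{\Y}})\) is coherent. It is killed by \(\uHom(i_\ast\O_{BG},-)\) because \(j^\ast i_\ast\simeq0\), so it vanishes by \Cref{prop:k_generates}.

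A more direct repair in the spirit of Plan 1 also works. Take \(\mathcal{M}\) coherent in the heart.
\begin{itemize}
\item Each \(\Hom(\O_\Y(\lambda),\mathcal{M})\) is finite-dimensional.
\item \(\bigcap_n\fm^n\mathcal{M}=0\), by Krull's intersection theorem plus \Cref{sec:equiv_nakayama}.
\item Hence each isotypic part of \(\fm^n\mathcal{M}\) vanishes for \(n\gg0\), so \(\mathcal{M}\simeq\lim_n\mathcal{M}/\fm^n\mathcal{M}\).
\item Since \(i^\ast j_\ast\simeq0\), \(\Hom(j_\ast\mathcal{N},-)\) kills every \(\mathcal{M}/\fm^n\mathcal{M}\), and hence kills \(\mathcal{M}\).
\end{itemize}
D\'evissage then gives the vanishing for all of \(\Coh(\Y)\).
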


  The proof of this of course uses \Cref{thm:serre_functor}.
  The main step aside from this is showing that, for
  \(\F \in \Coh(\Y)\), \(\Dse(j_\ast j^\ast \F) \simeq 0\)
  from which we can deduce that \(\Dmat(\localcoh \F) \simeq \Dse(\F)[\dim G]\).
  To do this, we show first that \(\Dmat(\localcoh \F)\)
  lands in \(\Coh(\Y)\) which forces \(\Dse(j_\ast j^\ast \F) \in \Coh(\Y)\).
  We can then exploit \Cref{prop:k_generates}
  which tells us that \(\uHom_{\QCoh(\Y)}(i_\ast \O_{BG}, -)\)
  is conservative on \(\Coh(\Y)\).

  \sssec{}

  We use both \Cref{thm:serre_functor} and
  \Cref{thm:matlis_local_duality} to also calculate
  the Serre functor for \(\QCoh(\punct{\Y})\) in \Cref{prop:serre_puncture}.
  This application recovers classical Serre duality for projective
  varieties.

  \ssec{Structure of the paper}

  In \Cref{sec:prelims}, we establish the notations
  and conventions used throughout the paper. We also
  recall some basic results that will be particularly
  useful. The reader is encouraged to skip this
  section and only refer back to it when necessary.

  In \Cref{sec:serre_general}, we consider formal
  properties for Serre functors of compactly
  generated DG categories, particularly
  on categories of the form \(\QCoh(\Y)\), including their
  existence.

  The main content of this note is in \Cref{sec:serre_affine}
  where we prove both \Cref{thm:serre_functor} and
  \Cref{thm:matlis_local_duality} as well as
  other properties of the Serre functor of \(\QCoh(\Y)\)
  when \(\Y \simeq \Spec A/G\) as above.

  \ssec{Acknowledgements}

  I would like to thank D. Beraldo for endless advice; many fruitful
  conversations; and for suggesting this topic.

  This work was supported by the Engineering and Physical 
  Sciences Research Council [EP/Z534882/1].

  \sec{Preliminaries}\label{sec:prelims}

  In this section, we outline the notations and conventions
  of the paper as well as defining some basic notions and
  recalling a few standard results that are worth
  highlighting. It is recommended the reader
  skip this section and only refer back to it when
  necessary.

  We always work over an algebraically closed
  field \(k\) of characteristic zero.

  \ssec{DG categories}

  As the existence of a Serre functor on compactly generated
  DG categories follows from abstract nonsense, we provide
  here some of the basics of higher category theory needed
  in order to deduce the existence of the Serre functor.
  We ignore all problems of a set-theoretic nature.

  \sssec{}

  For us, all categories are DG categories (i.e. stable \(\infty\)-categories
  with an action of \(\Vect\)) except for a few instances where 
  we work with abelian categories (which we make clear).
  We generally assume DG categories are cocomplete (i.e. 
  contain arbitrary direct sums) unless stated explicitly otherwise
  or unless they are of the form \(\Perf(\Y)\) or \(\Coh(\Y)\)
  for \(\Y\) a prestack. We assume
  functors between categories are exact and continuous (i.e. commute with
  arbitrary direct sums) unless explicitly stated otherwise or
  if they are (internal) hom functors. 

  The category of exact
  functors between \(\C\) and \(\mathcal{D}\) is denoted \(\Fun(\C, \mathcal{D})\)
  and the full subcategory of continuous functors is
  denote \(\Fun_{\text{cont}}(\C, \mathcal{D})\).

  In general, hom-spaces are denoted by \(\Hom_\C(-,-)\)
  while internal homs (when they exist) are denoted by \(\uHom_\C(-,-)\).
  When we do work in an abelian category, we use \(\sHom_\C(-,-)\) 
  and \(\usHom_\C(-,-)\) instead.

  \sssec{}
  
  Let \(\C\) be a category. An object \(c\in \C\) is \emph{compact}
  if the functor \(\Hom_\C(c,-)\) is continuous.
  The full subcategory of compact objects of \(\C\) is
  denoted by \(\cpt{\C}\).

  \sssec{}

  A family of objects \(\{c_\alpha\}\) in \(\C\) \emph{generates}
  \(\C\) if the family of functors \(\Hom_\C(c_\alpha, -)\)
  is a conservative family, that is \(\Hom_\C(c_\alpha, c) \simeq 0\)
  for all \(c_\alpha\) implies \(c \simeq 0\). This means
  that the only cocomplete full subcategory containing
  the \(c_\alpha\) is \(\C\) itself. We say \(\C\) is
  \emph{compactly generated} if there exists a family
  of compact objects generating \(\C\).

  \sssec{}

  Suppose \(\C\) is a category which is not necessarily cocomplete.
  The cocomplete category \(\Ind(\C)\) is the \emph{ind-extension}
  of \(\C\). It is equipped with a functor \(\C \to \Ind(\C)\)
  and is characterised by the property that, for all cocomplete
  categories \(\mathcal{D}\), the morphism
  \begin{displaymath}
    \Fun_{\text{cont}}(\Ind(\C), \mathcal{D}) \to \Fun(\C, \mathcal{D})
  \end{displaymath}
  is an isomorphism. The lift of a functor \(\C \to \mathcal{D}\) to 
  a continuous functor \(\Ind(\C) \to \mathcal{D}\) is
  called the \emph{ind-extension} of the functor.
  The category \(\Fun(\op{\C}, \Vect)\) is
  a model for \(\Ind(\C)\)

  \sssec{}

  The functor \(\C \to \Ind(\C)\) is fully faithful and its 
  essential image identifies with \(\cpt{\Ind(\C)}\).
  Moreover, if \(\C\) is cocomplete and compactly generated,
  then the functor \(\Ind(\cpt{\C}) \to \C\) is an
  equivalence.

  \sssec{}

  We say \(\C\) is \emph{proper} if \(x,y \in \cpt{\C}\)
  implies \(\Hom_\C(x,y) \in \cpt{\Vect}\). Recall,
  \(\cpt{\Vect}\) consists of objects with finite
  dimensional total cohomology. We say
  \(\C\) is \emph{reflexive} if \(x,y \in \cpt{\C}\)
  implies \(\Hom_\C(x,y)\) is reflexive,
  that is if each of its cohomologies
  are finite dimensional.

  \ssec{Algebraic geometry}

  \sssec{}

  Let \(\pt = \Spec k\). All of our prestacks are defined over \(k\).
  By 'stack` we mean 'algebraic stack` in the sense of \cite{drinfeld_gaitsgory:2013}.

  \sssec{}

  For a stack \(\Y\), one defines the category \(\IndCoh(\Y)\).
  It comes with a canonically defined functor \(\Psi: \IndCoh(\Y) \to \QCoh(\Y)\).
  When \(\Y\) is QCA, in the sense of \cite{drinfeld_gaitsgory:2013},
  \(\Psi\) comes from ind-extending the embedding \(\Coh(\Y) \hto \QCoh(\Y)\).

  Recall, if \(\map{f}{\Y_1}{\Y_2}\) is a morphism between stacks locally
  almost of finite type (laft) then we have a functor \(\map{f^!}{\IndCoh(\Y_2)}{\IndCoh(\Y_1)}\).
  If \(\Y\) is laft and \(p_\Y: \Y \to \pt\) is the canonical map
  then we write \(\omega_\Y\) for \(p_\Y^!(k) \in \IndCoh(\Y)\).

  The functor \(\Psi\) identifies
  \(\QCoh(\Y)\) with the left completion of \(\IndCoh(\Y)\) in
  its t-structure and induces an equivalence \(\plus{\IndCoh(\Y)} \to \plus{\QCoh(\Y)}\).
  If \(\Y\) is smooth then \(\Psi\) is an equivalence.

  Moreover, when \(\Y\) is smooth,
  \begin{displaymath}
    \Psi(\omega_\Y) \simeq \det(\bL_\Y)[\dim \Y]
  \end{displaymath}
  where \(\bL_\Y\) is cotangent complex of \(\Y\).

  \sssec{}

  If \(\Y\) is a QCA algebraic stack then the canonical
  equivalence \(\dual{\IndCoh(\Y)} \simeq \IndCoh(\Y)\)
  induces an involution \(\Dse: \op{\Coh(\Y)} \to \Coh(\Y)\).
  Explicitly, we have \(\Dse \simeq \uHom_{\QCoh(\Y)}(-, \Psi(\omega_\Y))\)
  \cite[4.4]{drinfeld_gaitsgory:2013}.

  \sssec{}

  For a prestack \(\Y\), the category \(\Perf(\Y)\) is the
  full subcategory of \(\QCoh(\Y)\) consisting of
  dualisable objects. Following \cite{ben_zvi_francis_nadler:2010},
  we say \(\Y\) is \emph{perfect} if \(\QCoh(\Y)\) is compactly
  generated and \(\Perf(\Y)\) and \(\cpt{\QCoh(\Y)}\)
  coincide.
  By \cite[Corollary 1.4.3]{drinfeld_gaitsgory:2013},
  all QCA stacks are perfect.

  \sssec{}\label{sec:quotient_notation}

  Except in \Cref{sec:serre_general}, \(\Spec A\) is a \emph{classical}
  affine scheme of finite type,
  acted on by a fixed reductive group \(G\) and \(\Y\)
  is the quotient \(\Spec A/G\).

  We remark that \(\QCoh(\Y)\) is compactly
  generated by \(\Perf(\Y)\) and \(\IndCoh(\Y)\)
  is compactly generated by \(\Coh(\Y)\).

  We have a natural map \(\pi: \Y \to BG\). As
  \(\pi_\ast\) is conservative, \(\QCoh(\Y)\)
  is compactly generated by \({\{\pi^\ast V_\lambda : \lambda \in \Lambda_G\}}\).
  Here, \(\Lambda_G\) is the set of dominant weights
  for \(G\) and, for \(\lambda \in \Lambda\), 
  \(V_\lambda\) is the corresponding irreducible
  representation. We write \(\O_\Y(\lambda)\)
  for \(\pi^\ast V_\lambda\).

  The action of \(G\) on \(\Spec A\) induces
  a \(\Gamma(G, \O_G)\)--comodule structure
  on \(A\). We write \(A^G\) for the invariants
  of this coaction. There is a map
  \(p: \Y \to \Spec A^G\) such that
  \(p_\ast\) is t-exact and preserves
  coherence \cite{alper:2013}.

  We write \(q: \Spec A \to \Y\) for the
  natural quotient/atlas map.
  Note that \(q^\ast\) is
  the forgetful functor from \(G\)--equivariant \(A\)--modules
  to \(A\)--modules.

  \ssec{Equivariant algebra}

  We will need to use some results on the category
  \(\C = \heart{\QCoh(\Y)}\), that is the abelian
  category of \(G\)--equivariant \(A\)--modules.
  These are \(G\)--equivariant version
  of many classical algebra results which
  will be useful for us to reference.

  \sssec{}

  A \(G\)--ideal of \(A\) is a \(G\)--equivariant \(A\)--module 
  contained in \(A\). We say \(A\) is \(G\)--local if it has
  a unique maximal \(G\)--ideal.

  \sssec{}

  The category \(\C\) has enough injectives (since \(\Spec A/G\) is algebraic
  with affine diagonal) and thus every object
  has an injective hull. Recall that an essential extension
  of a \(G\)--equivariant \(A\)--module \(M\) is a module \(N\)
  containing \(M\) such that every other \(G\)--submodule of 
  \(N\) intersects \(M\) non-trivially. The injective
  hull of \(M\) is then an injective essential extension.

  \sssec{}

  We say \(M \in \C\) is \(G\)-free if \(M \simeq \pi^\ast V \)
  where \(V\) is a representation of \(G\).
  Note that for every \(M \in \C\) there
  is an epimorphism from a \(G\)--free
  module to \(M\).

  Suppose \(\fm\) is a maximal \(G\)-ideal of \(A\).
  Then note that \(G\) acts transitively on \(\Spec (A/\fm)\)
  so \(\Spec (A/\fm) \simeq G/H\) for some subgroup \(H\).
  As a result, \(\Spec (A/\fm)/G \simeq BH\)
  and we can conclude that every \(G\)--equivariant
  \(A/\fm\)--module is \(G\)--free.

  \sssec{}\label{sec:equiv_nakayama}

  Let us assume that \(A\) is \(G\)--local with unique
  maximal \(G\)--ideal \(\fm\). If \(X \in \C\)
  is finitely generated and \(\fm X = X\)
  then, by Nakayama, there exists
  \(m \in \fm\) such that \(mx = x\)
  for all \(x \in X\). Note, this
  forces \(m \in A^G\) so \((\fm \cap A^G) X = X\).
  However, if \(A\) is \(G\)--local
  then \(A^G\) is local so Nakayama
  then implies \(X = 0\).

  \sssec{}

  As an application of this, suppose \(P \in \C\)
  is projective. Take \(F\) to be a \(G\)--free
  module with an epimorphism \(F \to P\).
  Let \(K\) be the kernel in \(\C\).
  In the same way one proves it with
  ordinary local rings, we see that
  \(K \subseteq \fm F\) and since \(P\)
  is projective, we deduce
  that \(\fm K = K\) in \(\C\).
  Thus, \(K = 0\). It follows
  that all projective objects in \(\C\)
  are \(G\)--free.

  \sssec{}\label{sec:socle}

  Take \(M \in \C\). The \(G\)--socle of \(M\) is taken to be
  \begin{displaymath}
    \Soc_G(M) = \usHom_\C(A/\fm, M).
  \end{displaymath}
  This is naturally a \(G\)--equivariant module
  over \(A/\fm\) and hence is \(G\)--free.
  We can also consider it as a \(G\)--submodule of \(M\).
  Hence, if \(M\) is \(G\)--Artinian, 
  \(\Soc_G(M) \simeq A/\fm \otimes_k V\)
  where \(V\) is a finite dimensional representation
  of \(G\). Note that if \(N\) is a \(G\)--submodule of \(M\),
  \(N \cap \Soc_G(M) \simeq \Soc_G(N)\) which is
  non-zero as \(N\) is also \(G\)--Artinian
  (hence it contains a simple \(G\)--submodule which must be
  \(A/\fm\)). So \(M\) is an essential extension of \(\Soc_G(M)\).
  They thus have the same injective hull. Write
  \(E\) for the injective hull of \(A/\fm\).
  Then the injective hull of \(M\) is \(E \otimes _k V\).

  \sec{Serre Functors}\label{sec:serre_general}

  In this section we cover some of the abstract theory
  of Serre functors.

  \ssec{Existence of Serre functors}

  \sssec{}

  Let \(\C\) be a compactly generated category. Then
  \begin{displaymath}
    \C \simeq \Ind(\cpt{\C}) \simeq \Fun(\op{(\cpt{\C})}, \Vect).
  \end{displaymath}
  Thus, for all \(x \in \cpt{\C}\) the functor \(\dual{\Hom_\C(x, -)}\)
  is representable by some \(\Se_\C(x)\). By ind-extension,
  the assignment \(x \mapsto \Se_\C(x)\) defines a
  continuous functor \(\map{\Se_\C}{\C}{\C}\). By construction,
  it is characterised by the property that
  \begin{displaymath}
    \dual{\Hom_\C(x,y)} \simeq \Hom_\C(y, \Se_\C(x))
  \end{displaymath}
  for all \(x \in \cpt{\C}\) and all \(y \in \C\)
  although it is enough to check this property
  with \(y \in \cpt{\C}\). When \(\C\) is obvious
  from context we just write \(\Se\) instead of \(\Se_\C\).

  \sssec{}

  Suppose \(\C\) is equipped with a symmetric monoidal
  structure with compact unit \(\mathbb{1}\). Further assume
  that all compact objects in \(\C\) are dualisable. Observe that
  \begin{displaymath}
    \dual{\Hom_\C(x,y)} \simeq \dual{\Hom_\C(\mathbb{1}, \dual{x}\otimes y)} \simeq \Hom_\C(y, \Se(\mathbb{1})\otimes x).
  \end{displaymath}
  Hence, \(\Se\) is equivalent to tensoring by \(\Se(\mathbb{1})\).

  \begin{notat}
    When \(\C\) is as above, we write \(\S_\C\) for \(\Se(\mathbb{1})\).
  \end{notat}

  \sssec{}

  In the case when \(\C\) is proper, note that, for \(x \in \cpt{\C}\),
  the functor \(\Hom_\C(x, -)\) preserves compactness and
  so has  a continuous right adjoint \(T\). See that \(T\)
  is determined by \(T(k) \in \C\) and 
  \begin{displaymath}
    \Hom_\C(y, T(k)) \simeq \Hom_\Vect(\Hom_\C(x,y), k) \simeq \dual{\Hom(x,y)}.
  \end{displaymath}
  Hence, \(T(k) = \Se(x)\). This provides another justification
  for the existence of the Serre functor when \(\C\) is proper.

  \sssec{}

  The Serre functor is fully faithful on compact objects
  if and only if \(\C\) is reflexive \cite[Proposition 1.2.6]{gaitsgory_yom_din:2017}.
  Thus, if \(\C\) is reflexive; its compact objects are dualisable
  under some symmetric monoidal structure; and the monoidal unit is compact, then
  \begin{displaymath}
    \uHom_\C(x \otimes \S_\C, y \otimes \S_\C) \simeq \uHom_\C(x,y)
  \end{displaymath}
  for all \(x,y \in \cpt{\C}\).

  \ssec{Serre Functors on QCoh}

  \sssec{}

  Let us assume \(\Y\) is a perfect stack so that the
  Serre functor for \(\QCoh(\Y)\) is given by tensoring with
  \(\S_{\QCoh(\Y)}\).

  \begin{notat}
    When \(\Y\) is as above we write \(\S_\Y\) for \(\S_{\QCoh(\Y)}\).
  \end{notat}

  \begin{prop}\label{prop:serre_under_morphism}
    Let \(\X\) and \(\Y\) be perfect stacks such that \(\map{f}{\X}{\Y}\)
    is a schematic, quasi-compact morphism. 
    Then \(f_\ast \S_\X \simeq \uHom_{\QCoh(\Y)}(f_\ast \O_\X, \S_\Y)\).
  \end{prop}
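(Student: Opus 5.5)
We will prove the isomorphism by Yoneda, testing both sides against the compact generators \(\Perf(\Y)\) of \(\QCoh(\Y)\); recall \(\Y\) is perfect, so these are exactly the compact objects.

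Fix \(P \in \Perf(\Y)\). For the left-hand side we use the adjunction \(f^\ast \dashv f_\ast\):
\begin{displaymath}
  \Hom_{\QCoh(\Y)}(P, f_\ast \S_\X) \simeq \Hom_{\QCoh(\X)}(f^\ast P, \S_\X).
\end{displaymath}
Since \(f^\ast\) is symmetric monoidal it preserves dualisable objects. As \(\X\) is perfect, \(f^\ast P\) is therefore compact. The defining property of the Serre functor \(\S_\X \otimes -\) then identifies the right-hand side with
\begin{displaymath}
  \dual{\Hom_{\QCoh(\X)}(\O_\X, f^\ast P)} \simeq \dual{\Hom_{\QCoh(\Y)}(\O_\Y, f_\ast f^\ast P)}.
\end{displaymath}
Next we apply the projection formula \(f_\ast f^\ast P \simeq f_\ast \O_\X \otimes P\), which holds because \(P\) is dualisable; in fact it holds for any morphism when the object is dualisable. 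This gives \(\dual{\Hom_{\QCoh(\Y)}(\dual{P}, f_\ast\O_\X)}\).

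For the right-hand side, internal-hom adjunction gives
\begin{displaymath}
  \Hom(P, \uHom(f_\ast \O_\X, \S_\Y)) \simeq \Hom(P \otimes f_\ast \O_\X, \S_\Y) \simeq \Hom(f_\ast \O_\X, \dual{P} \otimes \S_\Y).
\end{displaymath}
Here we use the characterisation of the Serre functor recorded earlier, \(\dual{\Hom_\C(x,y)} \simeq \Hom_\C(y,\Se_\C(x))\), with \(x = \dual{P}\) compact and \(y = f_\ast\O_\X\) arbitrary. This identifies the right-hand side with \(\dual{\Hom(\dual P, f_\ast \O_\X)}\). The general-\(y\) version is legitimate because both sides send colimits in \(y\) to limits and agree on compacts, as stated in the construction of \(\Se_\C\).

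Thus both objects corepresent the same functor on \(\Perf(\Y)\). We then need this identification to be natural in \(P\), and to induce an actual map in \(\QCoh(\Y)\) rather than merely an objectwise isomorphism. Both sides are continuous in the variable \(y\), respectively given as right adjoints, so the natural isomorphism of functors \(\op{\Perf(\Y)} \to \Vect\) suffices: we have \(\QCoh(\Y) \simeq \Fun(\op{\Perf(\Y)},\Vect)\) via restricted Yoneda.

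The main point requiring care is naturality. Each step must be natural in \(P\), including the dualities \(P \leftrightarrow \dual P\) and the Serre isomorphisms on \(\X\) and \(\Y\). This holds because every isomorphism used is a canonical adjunction or duality equivalence.

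The hypotheses that \(f\) is schematic and quasi-compact ensure that \(f_\ast\) is continuous and satisfies base change. They are not strictly needed for the dualisable projection formula, but they ensure that \(f_\ast \O_\X\) behaves well, so they are harmless.
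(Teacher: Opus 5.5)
Your proof is correct and follows essentially the paper's route: Yoneda against \(\Perf(\Y)\), the \((f^\ast, f_\ast)\)-adjunction with the Serre property on \(\X\), the projection formula, and then the Serre property on \(\Y\) with a non-compact test object. The only differences are that you apply it with \(x = \dual{P}\), \(y = f_\ast \O_\X\) and meet in the middle, where the paper uses \(x = \O_\Y\), \(y = f_\ast f^\ast P\), and that you rightly note the projection formula holds for perfect \(P\) and any \(f\), so the schematic, quasi-compact hypothesis the paper invokes at that step is not actually needed.
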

  \begin{proof}
    Suppose \(\F \in \Perf(\Y)\), Observe that
    \begin{displaymath}
      \Hom_{\QCoh(\Y)}(\F, f_\ast \S_\X) \simeq \dual{\Hom_{\QCoh(\X)}(\O_\X, f^\ast \F)}.
    \end{displaymath}
    However,
    \begin{displaymath}
      \dual{\Hom_{\QCoh(\Y)}(\O_\X, f^\ast \F)} \simeq \dual{\Hom_{\QCoh(\X)}(f^\ast \O_\Y, f^\ast \F)} \simeq \dual{\Hom_{\QCoh(\Y)}(\O_\Y, f_\ast f^\ast \F)}.
    \end{displaymath}
    Using the Serre functor on \(\QCoh(\Y)\) we deduce that 
    \begin{displaymath}
      \dual{\Hom_{\QCoh(\Y)}(\O_\X, f^\ast \F)} \simeq \Hom_{\QCoh(\Y)}(f_\ast f^\ast \F, \S_\Y).
    \end{displaymath}
    By assumptions on \(f\), we have a projection formula and thus
    \begin{displaymath}
      \dual{\Hom_{\QCoh(\Y)}(\O_\X, f^\ast \F)} \simeq \Hom_{\QCoh(\Y)}(f_\ast \O_\X \otimes \F, \S_\Y) \simeq \Hom_{\QCoh(\Y)}(\F, \uHom_{\QCoh(\Y)}(f_\ast \O_\X, \S_\Y)).
    \end{displaymath}
    Thus,
    \begin{displaymath}
      \Hom_{\QCoh(\Y)}(\F, f_\ast \S_\X) \simeq \Hom_{\QCoh(\Y)}(\F, \uHom_{\QCoh(\Y)}(f_\ast \O_\X, \S_\Y))
    \end{displaymath}
    for all \(\F\in \Perf(\Y)\), proving that 
    \(f_\ast \S_\X \simeq \uHom_{\QCoh(\Y)}(f_\ast \O_\X, \S_\Y)\).
  \end{proof}

  \sec{Serre functors on affine quotients}\label{sec:serre_affine}

  \ssec{Elementary properties of affine quotients}

  We now focus our study on the stack \(\Y\) 
  as defined in \Cref{sec:quotient_notation}. It
  is perfect so its Serre functor is
  given by tensoring with \(\S_\Y\). We would
  like to consider cases when \(\QCoh(\Y)\)
  is also proper.

  \begin{prop}
    If \(\Spec A\) has finitely many closed \(G\)--orbits then 
    \(\QCoh(\Y)\) is proper.
  \end{prop}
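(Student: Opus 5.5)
Since \(\QCoh(\Y)\) is compactly generated by the objects \(\O_\Y(\lambda) = \pi^\ast V_\lambda\), and every compact object is a finite colimit of shifts and retracts of these, it suffices to prove that \(\Hom_{\QCoh(\Y)}(\O_\Y(\lambda), \O_\Y(\mu))\) has finite dimensional total cohomology for all dominant \(\lambda, \mu\). Because \(\O_\Y(\lambda)\) is dualisable, this complex is
\begin{displaymath}
  \Hom_{\QCoh(\Y)}(\O_\Y, \O_\Y(\lambda^\vee \otimes \mu)) \simeq \Gamma(\Y, \pi^\ast W),
\end{displaymath}
where \(W = V_\lambda^\vee \otimes V_\mu\) is a finite dimensional representation. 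So the plan is to show that \(\Gamma(\Y, \pi^\ast W)\) is finite dimensional for every finite dimensional representation \(W\).

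The first step is to compute this global sections complex. Since \(G\) is reductive and we are in characteristic zero, \(\Rep(G)\) is semisimple. Also \(\pi\) is affine, so \(\pi_\ast\) is t-exact. Hence
\begin{displaymath}
  \Gamma(\Y, \pi^\ast W) \simeq (A \otimes W)^G,
\end{displaymath}
concentrated in degree \(0\). It therefore suffices to show that the module of covariants \((A\otimes W)^G\) is a finite dimensional \(k\)-vector space. Equivalently, we can use the map \(p: \Y \to \Spec A^G\). Since \(p_\ast\) is t-exact and preserves coherence \cite{alper:2013}, \(p_\ast \pi^\ast W\) is a finitely generated \(A^G\)-module, namely \((A\otimes W)^G\), and \(\Gamma(\Y, \pi^\ast W)\) is its global sections over the affine scheme \(\Spec A^G\).

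The second step, and the crux, is to show that \(\Spec A^G\) is a finite set of points, so that \(A^G\) is Artinian. A finitely generated module over an Artinian ring is finite dimensional over \(k\), and this would finish the proof. Here I would use the standard GIT fact that, for reductive \(G\), the map \(\Spec A \to \Spec A^G\) is surjective and its closed points are in bijection with the closed \(G\)-orbits of \(\Spec A\): two closed orbits are separated by invariant functions, and every fibre contains exactly one closed orbit. With finitely many closed orbits, \(\Spec A^G\) has finitely many closed points. As \(A^G\) is of finite type (Hilbert–Nagata), it is a Jacobson ring of finite type with finitely many maximal ideals, hence zero-dimensional and so Artinian.

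The main obstacle is this identification of the closed points of \(\Spec A^G\) with the closed orbits; it is where reductivity is genuinely used. I would cite it from GIT (or from Alper's good moduli space theory, which gives \(p\) as a good moduli space whose closed points correspond to closed points of \(\Y\)) rather than reprove it. The remaining steps, namely the reduction to the generators and the exactness of invariants, are routine.
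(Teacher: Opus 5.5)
Your proposal is correct and follows essentially the same route as the paper. Both arguments reduce, via dualisability, to the global sections of \(\pi^\ast W\), identify these with a finitely generated \(A^G\)-module in degree zero using \(p_\ast\) (t-exactness and coherence from Alper), and conclude that \(A^G\) is Artinian, hence finite dimensional, from the bijection between closed orbits and closed points of \(\Spec A^G\) together with finite generation of \(A^G\). Your identification with the covariants \((A\otimes W)^G\) and your Jacobson argument for zero-dimensionality only spell out steps the paper states more tersely.
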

  \begin{proof}
    It is enough to check that \(\Hom_{\QCoh(\Y)}(\O_\Y(\lambda), \O_\Y(\mu))\)
    is compact for any \(\lambda, \mu \in \Lambda_G\). 
    By dualisability, it is enough to check
    \(\Hom_{\QCoh(\Y)}(\O_\Y, \O_\Y(\lambda))\) is compact
    for all \(\lambda \in \Lambda_G\).
    Observe that this is the vector space underlying
    \(p_\ast \O_\Y(\lambda)\), which is coherent by \cite[Theorem 4.16(x)]{alper:2013},
    and lands in the heart by t-exactness. Thus it is a finitely
    generated \(A^G\)--module. The claim is thus proven
    if \(A^G\) is finite dimensional over \(k\).

    Closed orbits of \(\Spec A\) correspond bijectively to
    points of \(\Spec A^G\) (this is \cite[Theorem 4.16(iv)]{alper:2013} 
    or \cite[Proposition 0.1, Theorem 1.1]{mumford:1994}). Thus,
    if \(\Spec A\) has finitely many closed orbits, \(\Spec A^G\)
    has finitely many maximal ideals.

    Note \(A^G\) is a finitely generated \(k\)-algebra
    (this is Hilbert's 14th problem which was proven by Nagata
    when \(G\) is linearly reductive, or it is \cite[Theorem 4.16(xi)]{alper:2013}),
    thus it has finitely many maximal ideals if and only if it is
    Artinian. Thus \(A^G\) is Artinian and Noetherian so
    is indeed finite dimensional over \(k\).
  \end{proof}

  \sssec{}

  It thus makes sense to consider when \(\Spec A\)
  has finitely many closed orbits. From now on,
  we will assume that, in particular, \(\Spec A\)
  has a unique closed orbit. Equivalently,
  \(A\) is \(G\)--local. Also, note \(A^G\) is local.

  Take a point \(x \in \Spec A\) in this orbit
  and let \(H\) be its stabiliser. By Luna's slice
  theorem, there is a locally closed affine
  subvariety \(\Spec B\) of \(\Spec A\)
  containing \(x\) such that \(\{x\}\)
  is the unique closed orbit of \(H\)
  acting on \(\Spec B\) and \(\Y \simeq \Spec B/H\).
  
  Therefore, we assume from now on that \(\Spec A\)
  has a unique closed orbit and that it is a fixed point.
  Note that this is equivalent to the unique
  \(G\)--maximal ideal of \(A\) being 
  maximal as an ordinary ideal.

  \sssec{}

  The fixed point defines a closed immersion \(i: BG \hto \Y\)
  with open complement \(j: \punct{\Y} \hto \Y\). Let \(\QCoh(\Y)_{BG}\)
  be the full subcategory of \(\QCoh(\Y)\) given by the
  kernel of \(j^\ast\). The embedding \(\QCoh(\Y)_{BG}\)
  has a right adjoint, denoted by \(\localcoh\)
  and called the local cohomology functor.
  Explicitly,
  \begin{displaymath}
    \localcoh \F \simeq \ker(\F \to j_\ast j^\ast \F).
  \end{displaymath}

  \begin{prop}[{c.f. \cite[Proposition II.4.6.1.3]{gaitsgory_rozenblyum:2017}}]\label{prop:bg_gens}
    The category \(\QCoh(\Y)_{BG}\) is generated by the essential image of \(i_\ast\).
  \end{prop}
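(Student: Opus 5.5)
To prove that \(\QCoh(\Y)_{BG}\) is generated by the essential image of \(i_\ast\), I will show the following. If \(\F \in \QCoh(\Y)_{BG}\) and \(\Hom_{\QCoh(\Y)}(i_\ast V, \F) \simeq 0\) for every \(V \in \QCoh(BG)\), then \(\F \simeq 0\). By adjunction this hypothesis reads \(i^! \F \simeq 0\), where \(i^!\) is the (not necessarily continuous) right adjoint of \(i_\ast\) on \(\QCoh\). So the real claim is that \(i^!\) is conservative on \(\QCoh(\Y)_{BG}\).

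\textbf{Reduction to the affine case.} I will pull back along the atlas \(q: \Spec A \to \Y\). The pullback \(q^\ast\) is conservative, and it commutes with \(j^\ast\) and with \(j_\ast\) by flat base change. Hence \(q^\ast\) carries \(\QCoh(\Y)_{BG}\) into \(\QCoh(\Spec A)_{\{x\}}\), the sheaves supported at the fixed point \(x\), viewed as \(A\)-modules. Let \(\fm\) be the maximal ideal of \(x\), which is \(G\)-stable. The base change of \(i\) along \(q\) is the closed immersion \(\Spec A/\fm \hto \Spec A\). The key compatibility is that \(q^\ast i^! \F \simeq \uHom_A(A/\fm, q^\ast \F)\) as \(A\)-modules. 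I expect this from base change for \(i^!\) along the flat map \(q\): since \(i\) is a closed immersion into a Noetherian stack, \(i_\ast \O_{BG}\) is coherent, and \(\uHom\) out of a coherent object commutes with flat pullback. This reduces the claim to the classical statement that, for a Noetherian ring \(A\) and an \(\fm\)-supported complex \(M\), \(\uHom_A(A/\fm, M) \simeq 0\) implies \(M \simeq 0\).

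\textbf{The classical statement.} Choose generators \(f_1, \dots, f_r\) of \(\fm\) and let \(K\) be the Koszul complex on them. The thick subcategory generated by \(A/\fm\) contains \(K\), because \(K\) is perfect with finite-length cohomology and every finite-length module is a finite extension of copies of \(A/\fm\). Therefore \(\uHom_A(K, M) \simeq 0\). Since \(K\) is self-dual up to shift, this gives \(K \otimes M \simeq 0\). For \(\fm\)-supported \(M\) we have \(M \simeq \colim_n \uHom(K_n, M)\), where \(K_n\) is the Koszul complex on \(f_1^n, \dots, f_r^n\), and each \(K_n\) lies in the thick subcategory generated by \(K\). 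Hence \(M \simeq 0\).

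The Koszul argument could also be run directly on \(\Y\). Because the fixed point is a single point, \(\fm\) is \(G\)-stable and the Koszul complex can be built \(G\)-equivariantly from a finite-dimensional \(G\)-subrepresentation \(W \subset \fm\) that generates it, namely \(\Sym\)-type complexes \(\Lambda^\bullet W \otimes \O_\Y\). I would fall back on this if the base-change step proves awkward.

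The main obstacle is the compatibility of \(i^!\) with flat pullback, or equivalently justifying the identification \(\colim_n \uHom(K_n, \F) \simeq \F\) for \(\F\) supported at \(BG\). The latter is essentially the local cohomology formula \(\localcoh \F \simeq \colim_n \uHom(K_n, \F)\), which can be checked after the conservative pullback \(q^\ast\), where it is classical.
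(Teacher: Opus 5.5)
There is a genuine gap at the step you call the key compatibility, \(q^\ast\uHom_{\QCoh(\Y)}(i_\ast\O_{BG},\F)\simeq\uHom_A(A/\fm,q^\ast\F)\). Your justification, that internal Hom out of a coherent object commutes with flat pullback, holds only for \(\F\) bounded below, or for perfect sources. Conservativity, however, has to be checked on all of \(\QCoh(\Y)_{BG}\), which contains unbounded objects. For unbounded \(\F\), the object \(\uHom(i_\ast\O_{BG},\F)\) is a product totalisation over an infinite free resolution of \(A/\fm\). The functor \(q^\ast\) forgets the \(G\)-action and does not commute with infinite products, because a product of \(G\)-equivariant modules is only the rational part of the underlying product.

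The isomorphism really does fail. Take \(A=k[x]/(x^2)\) with \(\G_m\) acting so that \(x\) has weight \(1\). Then \(\QCoh(\Y)_{BG}=\QCoh(\Y)\) is the derived category of graded \(A\)-modules. Write \(k\langle w\rangle\) for \(k\) placed in weight \(w\), and put \(\F=\bigoplus_{n\geq 0}k\langle 2n\rangle[n]\). The group \(H^m\uHom_{\QCoh(\Y)}(k,\F)\) is the weightwise product of the one-dimensional graded groups \(\mathrm{Ext}^{m+n}(k,k\langle 2n\rangle)\). These sit in pairwise distinct weights, so the product is a countable direct sum. On the other hand, \(H^m\uHom_A(k,q^\ast\F)\simeq\prod_n k\). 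Both sides are nonzero here, so there is no contradiction with the proposition, but your reduction to the affine case is unjustified as written. The example also shows this compatibility is not equivalent to the identity \(\colim_n\uHom(K_n,\F)\simeq\F\), as you suggest: the latter is true and the former is false.

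Your fallback does repair the argument, and it needs only one equivariant Koszul complex.

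1. Let \(K=\Lambda^\bullet W\otimes\O_\Y\) with its Koszul differential, where \(W\subset\fm\) is a finite-dimensional \(G\)-stable subspace generating \(\fm\).
2. Since \(K\) is perfect, \(q^\ast\uHom(K,\F)\simeq\uHom_A(q^\ast K,q^\ast\F)\) for every \(\F\), bounded or not.
3. The cohomology of \(K\) is finitely generated and annihilated by \(\fm\), so it lies in the essential image of \(i_\ast\). Hence each \(K\otimes\O_\Y(\lambda)\) is a finite extension of shifts of objects \(i_\ast V\).
4. So \(\Hom(i_\ast V,\F)\simeq 0\) for all \(V\) gives \(\uHom(K,\F)\simeq 0\), hence \(\uHom_A(q^\ast K,q^\ast\F)\simeq 0\).
5. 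Your classical Koszul argument on \(\Spec A\) then gives \(q^\ast\F\simeq 0\). The complexes \(K_n\) on the \(f_i^n\) live on \(\Spec A\) and need not be made equivariant.

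The paper's proof is shorter and avoids base change entirely. Because \(j^\ast\) is t-exact, \(\QCoh(\Y)_{BG}\) is generated by its heart. Each object of the heart is the union of its coherent subsheaves. A coherent sheaf supported at the fixed point is killed by some \(\fm^N\), so it is filtered by the quotients \(\fm^n\F/\fm^{n+1}\F\), which lie in the image of \(i_\ast\). There the t-structure handles unbounded objects, which is exactly where your main route breaks.
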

  \begin{proof}
    The embedding \(\QCoh(\Y)_{BG} \hto \QCoh(\Y)\) is compatible
    with the t-structure since \(j^\ast\) is t-exact. Hence,
    \(\QCoh(\Y)_{BG}\) is generated by \(\heart{\QCoh(\Y)_{BG}}\).
    Any \(\F \in \heart{\QCoh(\Y)_{BG}}\) is the colimit
    of its coherent subsheaves \cite[Proposition 15.4]{laumon_moret_bailly:2005}.
    If \(\F \in \heart{\QCoh(\Y)_{BG}}\) is coherent
    then it must be annihilated by some power
    of the \(G\)--maximal ideal \(\fm\) of \(A\).
    Hence, \(\F\) has a filtration by objects
    of the form \(\fm^n \cdot \F\) and the
    quotients \(\fm^n \cdot \F / \fm^{n+1} \F\) are
    annihilated by \(\fm\) so lie in the essential
    image of \(i_\ast\).
  \end{proof}
  
  \sssec{}

  Our next small result is in a similar spirit to the one above:

  \begin{prop}\label{prop:k_generates}
    The functor \(\uHom_{\QCoh(\Y)}(i_\ast \O_{BG}, -)\) is conservative
    on \(\Coh(\Y)\).
  \end{prop}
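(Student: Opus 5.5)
Since \(\uHom_{\QCoh(\Y)}(i_\ast \O_{BG}, -)\) is exact, it suffices to show that if \(\F \in \Coh(\Y)\) satisfies \(\uHom_{\QCoh(\Y)}(i_\ast \O_{BG}, \F) \simeq 0\), then \(\F \simeq 0\). Suppose \(\F \neq 0\). Because \(\F\) is coherent it is bounded below in the t-structure, so there is a lowest degree \(n\) with \(H^n(\F) \neq 0\), and \(H^n(\F)\) is a finitely generated \(G\)--equivariant \(A\)--module. Since \(i_\ast \O_{BG} = A/\fm\) lies in the heart, truncation gives
\begin{displaymath}
  H^0\uHom_{\QCoh(\Y)}(i_\ast \O_{BG}, \F[n]) \simeq \usHom_\C(A/\fm, H^n(\F)) = \Soc_G(H^n(\F)),
\end{displaymath}
where \(\C = \heart{\QCoh(\Y)}\) and \(\Soc_G\) is as in \Cref{sec:socle}. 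Here \(\uHom_{\QCoh(\Y)}\) is the internal hom, so \(H^0\) means the degree-zero cohomology sheaf. This is justified because \(\uHom_{\QCoh(\Y)}(A/\fm, -)\) is left t-exact: \(A/\fm\) is connective and coherent, so its internal hom into a coconnective object is coconnective, and in the lowest degree only \(\tau^{\le n}\F = H^n(\F)[-n]\) contributes. The task therefore reduces to showing that a nonzero finitely generated \(M \in \C\) has \(\Soc_G(M) \neq 0\), or at least that \(\usHom_\C(A/\fm, M)\) and its higher derived analogues cannot all vanish.

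This reduction does not work directly for a general finitely generated \(M\). For example, \(A\) itself may have zero socle, as with \(A = k[x]\) under the \(\G_m\)-action. So the argument should use the full complex \(\uHom(A/\fm, \F)\), not just its socle. The right tool is the local cohomology/support argument together with \Cref{prop:bg_gens}. The key identity is
\begin{displaymath}
  \uHom_{\QCoh(\Y)}(i_\ast \O_{BG}, \F) \simeq \uHom_{\QCoh(\Y)}(i_\ast \O_{BG}, \localcoh \F),
\end{displaymath}
which holds because \(i_\ast \O_{BG}\) lies in \(\QCoh(\Y)_{BG}\) and \(\localcoh\) is right adjoint to the inclusion. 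Moreover, by \Cref{prop:bg_gens} and the projection formula (since \(i_\ast\) of anything is \(i_\ast \O_{BG} \otimes \pi^\ast V\)), the objects \(i_\ast \O_{BG} \otimes \O_\Y(\lambda)\) generate \(\QCoh(\Y)_{BG}\). Taking global sections of the vanishing internal hom against each \(\O_\Y(\lambda)\) then shows that \(\Hom\) from every generator into \(\localcoh\F\) vanishes. Hence \(\localcoh \F \simeq 0\).

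It remains to show that a coherent \(\F\) with \(\localcoh \F = 0\) vanishes; this is the main obstacle. Here \(\localcoh\F=0\) means \(\F \simeq j_\ast j^\ast \F\). I would apply \(p_\ast\), which is t-exact and preserves coherence, to get a finitely generated module over the local ring \(A^G\) in each degree. On the other hand, local cohomology along the \(G\)-fixed point, after taking invariants, should agree with local cohomology over \(A^G\) at its maximal ideal \(\fm \cap A^G\); this requires that the closed orbit is the preimage of the closed point of \(\Spec A^G\), which is Alper's correspondence. So the modules \(p_\ast H^n(\F)\), or better \(p_\ast(\F \otimes \O_\Y(\lambda))\) for every \(\lambda\), have vanishing \(\fm\cap A^G\)-local cohomology. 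A nonzero finitely generated module over a Noetherian local ring has some nonvanishing local cohomology (Grothendieck's nonvanishing at the depth). Hence \(p_\ast(\F \otimes \O_\Y(\lambda)) = 0\) for all \(\lambda\), that is, \(\Hom(\O_\Y(-\lambda)^\vee, \F) = 0\) for all generators, so \(\F = 0\).

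The delicate point is justifying the compatibility \(p_\ast \localcoh \simeq \Gamma_{\fm\cap A^G} p_\ast\). I would prove it by writing \(\punct{\Y}\) as covered by the opens where invariant functions \(f \in \fm \cap A^G\) are nonzero, which is valid since the zero locus of \(\fm\cap A^G\) in \(\Spec A\) is exactly the fixed point. Then \(j_\ast j^\ast\) is computed by a Čech complex of localizations at invariant elements, and \(p_\ast\) commutes with these localizations.
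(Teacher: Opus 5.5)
Your reduction is sound. Since \(i_\ast\O_{BG}\in\QCoh(\Y)_{BG}\) and the objects \(i_\ast\O_{BG}\otimes\O_\Y(\lambda)\) generate \(\QCoh(\Y)_{BG}\), the vanishing of \(\uHom_{\QCoh(\Y)}(i_\ast\O_{BG},\F)\) is equivalent to \(\localcoh\F\simeq 0\). So the proposition is equivalent to conservativity of \(\localcoh\) on \(\Coh(\Y)\). The paper goes the other way: it deduces that corollary from the proposition, which it proves via finiteness of the \(\fm\)-depth of the underlying \(A\)-complex.

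The gap is in your proof of this conservativity. Under the standing hypothesis of a unique closed orbit, \(A^G\) is a finitely generated local \(k\)-algebra, hence Artinian; the paper shows it is finite-dimensional when proving properness. So \(\fm\cap A^G\) is nilpotent, and three things follow:
\begin{itemize}
\item its zero locus in \(\Spec A\) is all of \(\Spec A\), not the fixed point;
\item the sets \(D(f)\) with \(f\in\fm\cap A^G\) are empty rather than covering \(\punct{\Y}\);
\item \(\Gamma_{\fm\cap A^G}\) is the identity on \(A^G\)-modules.
\end{itemize}
Your compatibility \(p_\ast\localcoh\simeq\Gamma_{\fm\cap A^G}\,p_\ast\) is therefore false. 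Take \(\Y=\A^1/\G_m\) and \(\F=\O_\Y\). Then \(A^G=k\), and restriction \(\Gamma(\Y,\O_\Y)=k\to\Gamma(\punct{\Y},\O_{\punct{\Y}})=k\) is an isomorphism. Hence \(p_\ast\localcoh\O_\Y\simeq 0\), while \(\Gamma_{\fm\cap A^G}\,p_\ast\O_\Y\simeq k\). You have misread Alper's correspondence: the preimage of the closed point of \(\Spec A^G\) is all of \(\Y\), not the closed orbit. Invariant functions cannot detect the puncture, so the step you flagged as delicate cannot be carried out over \(A^G\).

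The fix is to work over \(A\) rather than \(A^G\). One option uses \Cref{lem:underlying_loc}, which gives \(q^\ast\localcoh\F\simeq\Gamma_\fm q^\ast\F\) with \(\fm\) an honest maximal ideal of \(A\). Localise at \(\fm\), where a non-zero bounded complex with finitely generated cohomology over a Noetherian local ring has non-zero local cohomology. Then use that the support of \(q^\ast\F\) is closed and \(G\)-stable, so it contains the fixed point if it is non-empty.

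Alternatively, argue directly. If \(\F\simeq j_\ast j^\ast\F\), the projection formula and \(j^\ast i_\ast\simeq 0\) give \(\F\otimes i_\ast\O_{BG}\simeq j_\ast(j^\ast\F\otimes j^\ast i_\ast\O_{BG})\simeq 0\). But the top cohomology of \(\F\otimes i_\ast\O_{BG}\) is \(H^s(\F)/\fm H^s(\F)\), which is non-zero for \(\F\neq 0\) by the equivariant Nakayama lemma of \Cref{sec:equiv_nakayama}.
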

  \begin{proof}
    Take \(\cG \in \Coh(\Y)\) and assume \(\cG \not\simeq 0\). We need to show
    \(\uHom_{\QCoh(\Y)}(i_\ast \O_{BG}, \cG)\)
    is non-zero. Notice the underlying \(A\)--module is
    \(\uHom_{A}(A/\fm, \cG)\) where \(\fm\) is the
    unique \(G\)--maximal ideal of \(A\).
    Thus it is enough to show the \(\fm\)--depth
    of the complex \(\cG\) is finite. As
    \(\cG \in \Coh(\Y)\), this follows from
    \cite[2.5]{foxby_iyengar:2003}.
    Although the argument in \emph{loc. cit.}
    assumes \((A, \fm)\) is local,
    it still follows in our case by
    \Cref{sec:equiv_nakayama}.
  \end{proof}

  One consequence of this is that
  any coherent sheaf must have sections supported
  at the fixed point of \(\Y\). More precisely:

  \begin{cor}
    The functor \(\localcoh\) is conservative on \(\Coh(\Y)\).
  \end{cor}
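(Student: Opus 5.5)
The plan is to deduce this corollary directly from \Cref{prop:k_generates}, using that \(i_\ast \O_{BG}\) lies in \(\QCoh(\Y)_{BG}\) and that \(\localcoh\) is right adjoint to the embedding \(\QCoh(\Y)_{BG} \hto \QCoh(\Y)\). Since \(\localcoh\) is exact, it suffices to show that \(\cG \in \Coh(\Y)\) with \(\localcoh \cG \simeq 0\) forces \(\cG \simeq 0\).

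First, I check that \(i_\ast \O_{BG} \in \QCoh(\Y)_{BG}\). This holds because \(j^\ast i_\ast \simeq 0\): the base change of \(i\) along the open complement \(j\) is empty. Next, I use the adjunction to get, for any \(\F \in \QCoh(\Y)_{BG}\) and any \(\cG \in \QCoh(\Y)\),
\begin{displaymath}
  \Hom_{\QCoh(\Y)}(\F, \cG) \simeq \Hom_{\QCoh(\Y)}(\F, \localcoh \cG).
\end{displaymath}

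Now I upgrade this to the internal hom. For \(\mathcal{P} \in \Perf(\Y)\), the object \(\mathcal{P} \otimes i_\ast \O_{BG}\) still lies in \(\QCoh(\Y)_{BG}\), since \(j^\ast\) is symmetric monoidal. Therefore
\begin{displaymath}
  \Hom(\mathcal{P}, \uHom(i_\ast \O_{BG}, \cG)) \simeq \Hom(\mathcal{P} \otimes i_\ast\O_{BG}, \cG) \simeq \Hom(\mathcal{P}, \uHom(i_\ast \O_{BG}, \localcoh\cG)).
\end{displaymath}
As \(\Perf(\Y)\) generates \(\QCoh(\Y)\), this gives \(\uHom(i_\ast \O_{BG}, \cG) \simeq \uHom(i_\ast \O_{BG}, \localcoh \cG)\), naturally in \(\cG\). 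Alternatively, one can apply \(\uHom(i_\ast\O_{BG}, -)\) to the fibre sequence \(\localcoh \cG \to \cG \to j_\ast j^\ast \cG\) and observe that \(\uHom(i_\ast\O_{BG}, j_\ast j^\ast\cG) \simeq j_\ast \uHom(j^\ast i_\ast \O_{BG}, j^\ast \cG) \simeq 0\).

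Finally, suppose \(\cG \in \Coh(\Y)\) and \(\localcoh \cG \simeq 0\). Then \(\uHom(i_\ast \O_{BG}, \cG) \simeq 0\), so \(\cG \simeq 0\) by \Cref{prop:k_generates}. The only step needing any care is the internal-hom identification in the previous paragraph, and it is routine once one notes that \(\QCoh(\Y)_{BG}\) is a tensor ideal.
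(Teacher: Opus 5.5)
Your proposal is correct and follows the paper's proof. Both arguments use that \(\localcoh\) is right adjoint to the inclusion of \(\QCoh(\Y)_{BG}\), which contains \(i_\ast \O_{BG}\), to get \(\uHom_{\QCoh(\Y)}(i_\ast \O_{BG}, \cG) \simeq \uHom_{\QCoh(\Y)}(i_\ast \O_{BG}, \localcoh \cG) \simeq 0\), and then apply Proposition~\ref{prop:k_generates}. Your extra step upgrading the adjunction to internal homs (using that \(\QCoh(\Y)_{BG}\) is a tensor ideal, or using the fibre sequence with \(j_\ast j^\ast\)) only makes explicit what the paper leaves implicit.
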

  \begin{proof}
    Take \(\cG \in \Coh(\Y)\) and assume \(\localcoh \cG \simeq 0\).
    Then 
    \begin{displaymath}
      \uHom_{\QCoh(\Y)}(i_\ast \O_{BG}, \localcoh \cG) \simeq \uHom_{\QCoh(\Y)}(i_\ast \O_{BG}, \cG) \simeq 0
    \end{displaymath}
    since \(\localcoh\) is right adjoint to \(\QCoh(\Y)_{BG} \hto \QCoh(\Y)\).
    However, by \Cref{prop:k_generates}, this implies
    \(\cG \simeq 0\).

    Equivalently, one can use \cite[Theorem 2.1]{foxby_iyengar:2003}
    and the argument in the proof above.
  \end{proof}

  \ssec{The Serre functor}

  We now look at the object \(\S_\Y\) itself and the functor
  \(\uHom_{\QCoh(\Y)}(-, \S_\Y)\). We introduce some notation for it:

  \begin{notat}
    Write \(\Dmat: \op{\QCoh(\Y)} \to \QCoh(\Y)\) for
    the functor \(\uHom_{\QCoh(\Y)}(-, \S_\Y)\).
  \end{notat}

    \begin{prop}\label{prop:s_inj}
    The functor \(\Dmat\) is t-exact.
    Thus \(\S_\Y \in \heart{\QCoh(\Y)}\) and is an injective object in
    this abelian category.
  \end{prop}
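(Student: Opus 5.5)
The key identity is that, for every \(\F \in \QCoh(\Y)\) and every \(\lambda \in \Lambda_G\),
\begin{displaymath}
  \Hom_{\QCoh(\Y)}(\O_\Y(\lambda), \Dmat(\F)) \simeq \Hom_{\QCoh(\Y)}(\F \otimes \O_\Y(\lambda), \S_\Y) \simeq \dual{\Hom_{\QCoh(\Y)}(\O_\Y(\lambda)^\vee, \F)},
\end{displaymath}
where the second isomorphism is the defining property of the Serre functor, applied with the compact object \(\O_\Y(\lambda)^\vee \simeq \O_\Y(\lambda^\ast)\). This reduces t-exactness of \(\Dmat\) to t-exactness of the functors \(\Hom_{\QCoh(\Y)}(\O_\Y(\mu), -)\).

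First, each \(\Hom_{\QCoh(\Y)}(\O_\Y(\mu), -) \simeq \Gamma(\Y, \O_\Y(\mu^\ast) \otimes -)\) is t-exact. Indeed, \(\O_\Y(\mu^\ast)\otimes -\) is t-exact because \(\O_\Y(\mu^\ast)\) is a vector bundle, and \(\Gamma(\Y,-) = \Gamma(\Spec A^G, p_\ast -)\) is t-exact because \(p_\ast\) is t-exact and \(\Spec A^G\) is affine. Since \(\dual{(-)}\) is exact on \(\Vect\) and exchanges \(H^n\) with \(H^{-n}\), the displayed identity gives
\begin{displaymath}
  H^n \Hom(\O_\Y(\lambda), \Dmat\F) \simeq \dual{\big(H^{-n}\Hom(\O_\Y(\lambda^\ast), \F)\big)}.
\end{displaymath}
The same reasoning shows that \(\Hom(\O_\Y(\lambda), -)\) computes the \(\lambda\)-isotypic part of \(H^0(q^\ast -)\), so the family \(\{\O_\Y(\lambda)\}\) detects cohomology: an object \(\cG\) lies in \(\QCoh(\Y)^{\le 0}\) (resp. \(\QCoh(\Y)^{\ge 0}\)) if and only if \(H^n\Hom(\O_\Y(\lambda), \cG) = 0\) for all \(n > 0\) (resp. \(n < 0\)) and all \(\lambda\). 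This uses conservativity of \(\pi_\ast\) together with complete reducibility of \(\Rep(G)\) in characteristic zero.

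Now if \(\F \in \heart{\QCoh(\Y)}\), the right-hand side vanishes for \(n \neq 0\), so \(\Dmat\F \in \heart{\QCoh(\Y)}\). Since \(\Dmat\) is exact, it is then t-exact as a contravariant functor: it sends \(\QCoh^{\le 0}\) to \(\QCoh^{\ge 0}\) and \(\QCoh^{\ge 0}\) to \(\QCoh^{\le 0}\). In detail, take \(\F \in \QCoh^{\le 0}\). Then \(H^{-n}\Hom(\O_\Y(\lambda^\ast),\F)=0\) for \(-n>0\), i.e. for \(n<0\). Hence \(H^n\Hom(\O_\Y(\lambda),\Dmat\F)=0\) for \(n<0\), so \(\Dmat\F \in \QCoh^{\ge 0}\). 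The case \(\F \in \QCoh^{\ge 0}\) is symmetric. This requires no boundedness assumption, because the identity holds for all \(\F\).

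Taking \(\F = \O_\Y\) in the heart gives \(\S_\Y \simeq \Dmat(\O_\Y) \in \heart{\QCoh(\Y)}\). For injectivity, for \(M \in \heart{\QCoh(\Y)}\) we have
\begin{displaymath}
  \mathrm{Ext}^n(M, \S_\Y) = H^n\Gamma(\Y, \Dmat M) = H^n\Hom(\O_\Y, \Dmat M).
\end{displaymath}
This vanishes for \(n \ne 0\) because \(\Dmat M\) lies in the heart and \(\Gamma\) is t-exact. In particular \(\mathrm{Ext}^1(-, \S_\Y) = 0\) on the heart, so \(\S_\Y\) is injective there.

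The step needing most care is the claim that the family \(\O_\Y(\lambda)\) detects the t-structure, i.e. that \(\Hom(\O_\Y(\lambda),-)\) is t-exact and jointly conservative on cohomology sheaves. I would handle it by identifying \(\bigoplus_\lambda \Hom(\O_\Y(\lambda),\cG)\otimes V_\lambda\) with \(\Gamma(\Spec A, q^\ast\cG)\), using that \(q\) is affine and \(q^\ast\) is t-exact and conservative.
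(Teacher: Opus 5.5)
Your proposal is correct and is essentially the paper's argument: both rest on the identity \(\Hom(\O_\Y(\lambda), \Dmat\F) \simeq \dual{\Hom(\O_\Y(\lambda^\ast), \F)}\), combined with the t-exactness and joint conservativity of the functors \(\Hom(\O_\Y(\mu), -)\). You also spell out two points the paper leaves implicit: why these functors detect the t-structure, and the injectivity of \(\S_\Y\) via the vanishing of \(\mathrm{Ext}^1(-,\S_\Y)\) on the heart.
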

  \begin{proof}
    First, note that, as \(G\) is linearly reductive,
    the functor \(H_\lambda =\Hom_{\QCoh(\Y)}(\O_\Y(\lambda), -)\) is t-exact
    for all \(\lambda \in \Lambda_G\). Moreover, the
    family of functors \(\set{H_\lambda}{\lambda \in \Lambda_G}\)
    is jointly conservative. Thus \(\uHom_{\QCoh(\Y)}(-, \S_\Y)\)
    is t-exact if and only if
    \begin{displaymath}
      H_\lambda(\uHom_{\QCoh(\Y)}(-, \S_Y)) \simeq \Hom_{\QCoh(\Y)}(- \otimes \O_\Y(\lambda), \S_\Y)
    \end{displaymath}
    is t-exact for all \(\lambda \in \Lambda_G\). However, observe that
    \begin{displaymath}
      \Hom_{\QCoh(\Y)}(- \otimes \O_\Y(\lambda), \S_\Y) \simeq
      \dual{H_{\dual{\lambda}}(\F)}.
    \end{displaymath}
    Thus, as taking the linear dual is t-exact and 
    conservative, the claim follows again from the 
    t-exactness of the \(H_\lambda\).
  \end{proof}

  \begin{lem}\label{lem:supp_s}
    We have \(\S_\Y \in \QCoh(\Y)_{BG}\).
  \end{lem}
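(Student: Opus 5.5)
The strategy is to show that \(\S_\Y\) is \(\fm\)--power torsion, where \(\fm\) is the unique \(G\)--maximal ideal of \(A\). By \Cref{prop:s_inj}, \(\S_\Y\) lies in \(\heart{\QCoh(\Y)}\), so we may regard it as a \(G\)--equivariant \(A\)--module \(S\). If every element of \(S\) is killed by some power of \(\fm\), then \(j^\ast \S_\Y \simeq 0\). Indeed, \(q^\ast j^\ast\) is restriction to \(\Spec A\) minus the fixed point, and there \(\fm\) generates the unit ideal locally, so an \(\fm\)--power torsion module restricts to zero. Thus it suffices to prove that each \(s \in S\) is annihilated by \(\fm^n\) for some \(n \gg 0\).

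Since \(S\) is the sum of its isotypic components, we may take \(s\) to lie in the image of some \(G\)--map \(V_\lambda \to S\). Equivalently, \(s\) comes from a class in \(\Hom_{\QCoh(\Y)}(\O_\Y(\lambda), \S_\Y)\), which sits in degree zero by t-exactness. The defining property of the Serre functor, together with dualisability of \(\O_\Y(\lambda)\), identifies this space with the linear dual of
\begin{displaymath}
  \Hom_{\QCoh(\Y)}(\O_\Y, \O_\Y(\dual{\lambda})) \simeq (A \otimes V_{\dual\lambda})^G .
\end{displaymath}
This is finite dimensional by the properness proposition, since \(A^G = k\) in our situation. Under this identification, the \(A\)--module structure on \(S\) is dual to multiplication on \(A\). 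Concretely, for a \(G\)--stable finite-dimensional subspace \(U \subseteq \fm^n\), the action map \(U \otimes V_\lambda \to S\) on \(s\) is dual to the multiplication map from \(U\)--translates into \((A\otimes W)^G\) for suitable finite-dimensional representations \(W\). Hence \(\fm^n s = 0\) as soon as the relevant finite-dimensional invariant spaces \((\fm^n A \otimes W)^G\) vanish, where \(W\) ranges over the finitely many representations \(V_{\dual\lambda}\otimes V_\mu\) that can occur.

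The key step is therefore: for each fixed finite-dimensional representation \(W\), we have \((\fm^n \otimes W)^G = 0\) for \(n \gg 0\). The spaces \((\fm^n \otimes W)^G\) form a decreasing chain in the finite-dimensional space \((A\otimes W)^G\), so the chain stabilises. Taking invariants commutes with intersections, so the stable value is
\begin{displaymath}
  \Bigl(\bigcap_n \fm^n (A\otimes W)\Bigr)^G .
\end{displaymath}
An equivariant Krull intersection theorem now shows that \(\bigcap_n \fm^n(A\otimes W) = 0\). Its proof is the usual Artin--Rees argument, applied to the finitely generated \(G\)--equivariant module \(X = \bigcap_n \fm^n (A\otimes W)\), which satisfies \(\fm X = X\). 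The equivariant Nakayama lemma of \Cref{sec:equiv_nakayama} then gives \(X = 0\).

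I expect the main obstacle to be the bookkeeping in the second paragraph. One has to check carefully that the \(A\)--action on \(S\), transported through the Serre duality isomorphism, really is the transpose of multiplication on \(A\). This requires naturality of the Serre isomorphism in the compact variable with respect to the maps \(\O_\Y(\mu) \to \O_\Y\) given by equivariant elements of \(\fm^n\). Once that compatibility is in place, the vanishing reduces cleanly to the equivariant Krull intersection statement above.
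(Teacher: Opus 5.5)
Your proposal is correct, and it takes a genuinely different route from the paper. The paper argues in the style of Matlis duality. Since \(\Dmat\) is t-exact and \(\Dmat(\S_\Y) \simeq \O_\Y\), a descending chain of \(G\)--submodules of \(\S_\Y\) becomes an ascending chain of quotients of \(\O_\Y\), which stabilises because \(A\) is Noetherian. Hence \(\S_\Y\) is \(G\)--Artinian, and \(\fm\)--power torsion follows.

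You instead work one isotypic component at a time. Serre duality identifies \(\Hom_G(V_\lambda, S)\) with the dual of the covariants \((A \otimes V_\lambda)^G\), and naturality turns the \(A\)--action into the transpose of multiplication. Torsion then reduces to the \(\fm\)--adic filtration on each finite-dimensional covariant space being eventually zero, i.e.\ to an equivariant Krull intersection theorem plus properness.

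The paper's route is more formal and gives the stronger statement that \(\S_\Y\) is \(G\)--Artinian, matching the classical picture. Yours in effect exhibits \(\S_\Y\) as the \(G\)--finite part of the \(\fm\)--adically continuous dual of \(A\), and uses properness directly rather than through \(\Dmat(\S_\Y) \simeq \O_\Y\). It also avoids having to show that a nonzero subquotient \(N\) of \(\S_\Y\) has \(\Dmat(N) \not\simeq 0\). The paper handles that step by embedding \(A/\fm\) into \(N\), which is not automatic at that stage; what actually closes it is the conservativity of \(\Dmat\) visible in the proof of \Cref{prop:s_inj}.

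Your second paragraph needs tidying, though the fix is easy.
\begin{itemize}
\item Serre duality gives \(\Hom(\O_\Y(\lambda), \S_\Y) \simeq \dual{\Hom(\O_\Y, \O_\Y(\lambda))} = \dual{((A \otimes V_\lambda)^G)}\), with \(V_\lambda\) rather than \(V_{\dual{\lambda}}\).
\item Let \(\varphi : V_\lambda \to S\) correspond to the functional \(\ell\), and let \(\tilde\varphi\) be its \(A\)--linear extension. For a finite-dimensional \(G\)--stable \(U \subseteq \fm^n\), the action map \(u \otimes v \mapsto u\varphi(v)\) is \(\tilde\varphi \circ m_U\), where \(m_U : \pi^\ast(U \otimes V_\lambda) \to \O_\Y(\lambda)\) is multiplication.
\item Since \(\S_\Y\) represents \(y \mapsto \dual{\Hom(\O_\Y, y)}\), the isomorphism is natural in \(y\) by construction. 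So \(\tilde\varphi \circ m_U\) corresponds to \(\ell\) precomposed with \((A \otimes U \otimes V_\lambda)^G \to (A \otimes V_\lambda)^G\), whose image lies in \((\fm^n \otimes V_\lambda)^G\).
\end{itemize}
Thus the single representation \(W = V_\lambda\) suffices, independently of \(U\). This matters. If \(W\) really had to range over \(V_{\dual{\lambda}} \otimes V_\mu\) for all \(\mu\) occurring in \(\fm^n\), there would in general be infinitely many such \(W\) and no uniform \(n\).

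Finally, \(A^G\) need not be \(k\) when \(A\) is non-reduced. You only use finite-dimensionality of the covariants, which the properness proposition supplies in any case.
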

  \begin{proof}
    By \Cref{prop:s_inj}, \(\S_\Y \in \heart{\QCoh(\Y)}\)
    so we treat it as a \(G\)--equivariant \(A\)--module. 
    We will show it is \(G\)--Artinian.
    Let
    \(\fm\) be the unique maximal \(G\)--ideal of \(A\).

     Suppose we have a chain of submodules:
    \begin{displaymath}
      \cdots \hto M_3 \hto M_2  \hto M_1 \hto \S_\Y.
    \end{displaymath}
    Applying \(\Dmat\) yields epimorphisms:
    \begin{displaymath}
      \O_\Y \tto \Dmat(M_1) \tto \Dmat(M_2) \tto \Dmat(M_3) \tto \cdots.
    \end{displaymath}
    As \(A\) is Noetherian, this stabilises so there exists \(a \in \Z\) such that
    \(\Dmat(M_a) \simeq \Dmat(M_{a+1})\).
    Let \(N = M_a/M_{a+1}\). 

    Aiming for a contradiction, assume \(N \not\simeq 0\).
    Then there is a monomorphism \(A/\fm \simeq i_\ast \O_{BG} \hto N\).
    Applying \(\Dmat\) yields an epimorphism \(\Dmat(i_\ast \O_{BG}) \tto \Dmat(N) \simeq 0\).
    However,
    \begin{displaymath}
      \Dmat(i_\ast \O_{BG}) \simeq i_\ast \O_{BG}
    \end{displaymath}
    by \Cref{prop:serre_under_morphism}. Thus, this is a contradiction.

    It follows that \(M_a \simeq M_{a+1}\) so
    \(\S_\Y\) is Artinian. Thus it is, element-wise,
    annihilated by powers of \(\fm\).
  \end{proof}

  \sssec{}

  It turns out \Cref{lem:supp_s} is the last piece we need to prove
  that \(\S_\Y \simeq \localcoh \Psi(\omega_\Y)[\dim G]\) as claimed in \Cref{thm:serre_functor}.

  \begin{proof}[Proof of \Cref{thm:serre_functor}]
    As proven in \Cref{lem:supp_s}, \(\S_\Y \in \QCoh(\Y)_{BG}\)
    and we know \(\QCoh(\Y)_{BG}\) is generated by the essential image
    of \(i_\ast\) by \Cref{prop:bg_gens}.
    Thus, \(\S_\Y\) is characterised by the fact that, for all \(V \in \cpt{\Rep(G)}\),
    we have
    \begin{displaymath}
      \Hom_{\QCoh(\Y)_{BG}}(i_\ast V, \S_\Y) \simeq \dual{\Hom_{\QCoh(\Y)}(\O_\Y, i_\ast V)}
      \simeq \dual{\Hom_{\Rep(G)}(\O_{BG}, V)} \simeq \Hom_{\Rep(G)}(V, \O_{BG}).
    \end{displaymath}

    Now, \(\localcoh \Psi(\omega_\Y) \in \QCoh(\Y)_{BG}\) and
    \begin{displaymath}
      \Hom_{\QCoh(\Y)_{BG}}(i_\ast V, \localcoh \Psi(\omega_\Y)) \simeq \Hom_{\QCoh(\Y)}(i_\ast V, \Psi(\omega_\Y))
    \end{displaymath}
    since \(\localcoh\) is the right adjoint of \(\QCoh(\Y)_{BG} \hto \QCoh(\Y)\).
    Then, by \((i^{\IndCoh}_\ast, i^!)\)--adjunction,
    \begin{displaymath}
      \Hom_{\QCoh(\Y)}(i_\ast V, \Psi(\omega_\Y)) \simeq \Hom_{\IndCoh(\Y)}(i^{\IndCoh}_\ast V, \omega_\Y) \simeq \Hom_{\Rep(G)}(V, \Psi(\omega_{BG})).
    \end{displaymath}
    However, \(\Psi(\omega_{BG}) \simeq \O_{BG}[-\dim G]\) and, therefore,
    \begin{displaymath}
      \Hom_{\QCoh(\Y)_{BG}}(i_\ast V, \localcoh \Psi(\omega_\Y)) \simeq \Hom_{\Rep(G)}(V, \O_{BG}[-\dim G]).
    \end{displaymath}

    Hence, \(\S_\Y \simeq \localcoh \Psi(\omega_\Y)[\dim G]\).
  \end{proof}

  \sssec{}

  Our next minor result is how \(\S_\Y\) is the
  injective hull of \(i_\ast \O_{BG}\) in the abelian
  category \(\heart{\QCoh(\Y)}\) which explains
  the connection to classical Matlis duality.

  \begin{prop}
    In the abelian category \(\heart{\QCoh(\Y)}\), \(\S_\Y\)
    is indecomposable, that is it cannot be written as a non-trivial
    direct sum.
  \end{prop}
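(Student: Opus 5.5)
We show that any direct sum decomposition of \(\S_\Y\) in \(\heart{\QCoh(\Y)}\) is trivial by computing \(\End(\S_\Y)\). Suppose \(\S_\Y \simeq M_1 \oplus M_2\) in \(\heart{\QCoh(\Y)}\). By \Cref{prop:s_inj}, \(\Dmat\) is t-exact, so it carries this decomposition to
\[
\Dmat(\S_\Y) \simeq \Dmat(M_1) \oplus \Dmat(M_2)
\]
in the heart. The first step is to identify \(\Dmat(\S_\Y) = \uHom_{\QCoh(\Y)}(\S_\Y, \S_\Y)\).

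Reflexivity of \(\QCoh(\Y)\) follows from properness, which was established above. Applying the last observation of \Cref{sec:serre_general} with \(x = y = \O_\Y\) gives \(\uHom(\S_\Y,\S_\Y) \simeq \uHom(\O_\Y,\O_\Y) \simeq \O_\Y\). Concretely, for each \(\lambda\), taking \(\Hom(\O_\Y(\lambda), -)\) of both sides reduces to \(\Hom(\S_\Y(\lambda), \S_\Y) \simeq \Hom(\O_\Y(\lambda),\O_\Y)\), which is exactly the fully faithfulness of the Serre functor on compacts. The introduction also uses this identity. So \(\O_\Y \simeq \Dmat(M_1) \oplus \Dmat(M_2)\) as \(G\)--equivariant \(A\)--modules.

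Next, \(A\) is indecomposable as a \(G\)--equivariant \(A\)--module. A splitting \(A = I_1 \oplus I_2\) into \(G\)--ideals gives equivariant idempotents \(e_1, e_2 \in A^G\), since \(\End_{\C}(A) = A^G\). But \(A^G\) is local, so it has no nontrivial idempotents. Hence one summand, say \(\Dmat(M_2)\), vanishes.

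Finally we conclude \(M_2 = 0\). The step I expect to need the most care is showing that \(\Dmat\) is conservative on the heart. If \(M_2 \neq 0\), then by \Cref{lem:supp_s} the module \(M_2 \subseteq \S_\Y\) is \(G\)--Artinian. So it has nonzero \(G\)--socle, giving a monomorphism \(i_\ast \O_{BG} \hto M_2\) as in \Cref{sec:socle}. Applying \(\Dmat\), which turns monomorphisms into epimorphisms by t-exactness, yields an epimorphism \(\Dmat(M_2) \tto \Dmat(i_\ast\O_{BG}) \simeq i_\ast \O_{BG} \neq 0\); here the identification \(\Dmat(i_\ast \O_{BG}) \simeq i_\ast \O_{BG}\) comes from \Cref{prop:serre_under_morphism}, as in the proof of \Cref{lem:supp_s}. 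This contradicts \(\Dmat(M_2) = 0\). Therefore \(M_2 = 0\), and \(\S_\Y\) is indecomposable.

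An alternative ending avoids the reflexivity identity: \(\S_\Y\) is an injective essential extension of its socle, so it suffices to show \(\Soc_G(\S_\Y) \simeq A/\fm\). This holds since
\[
\usHom(A/\fm,\S_\Y) = \Dmat(i_\ast\O_{BG}) \simeq i_\ast \O_{BG}.
\]
Then \(\S_\Y \simeq E\), the injective hull of the simple object \(A/\fm\), which is indecomposable.
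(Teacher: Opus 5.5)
Your main argument is the paper's own: \(\uHom(\S_\Y,\S_\Y)\simeq\O_\Y\) by reflexivity, and a splitting of \(\O_\Y\) would give a nontrivial idempotent in the local ring \(A^G\). What you add is the step the paper leaves implicit, namely that a nonzero summand \(M_2\) cannot have \(\Dmat(M_2)\simeq 0\).

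Your socle argument for that step is fine, with one small imprecision. A nonzero \(G\)--Artinian module is only guaranteed to contain some simple \(i_\ast V_\lambda\), not necessarily \(i_\ast\O_{BG}\) itself. Here it does, because \(\Soc_G(M_2)\subseteq\Soc_G(\S_\Y)\simeq i_\ast\O_{BG}\); in any case \(\Dmat(i_\ast V_\lambda)\simeq i_\ast\dual{V_\lambda}\neq 0\) would do just as well.

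That argument is also more than you need. The computation in the proof of \Cref{prop:s_inj} gives \(\Hom(\O_\Y(\lambda),\Dmat(M))\simeq\dual{\Hom(\O_\Y(\dual{\lambda}),M)}\) for every \(M\), so \(\Dmat\) is conservative on all of \(\QCoh(\Y)\). Alternatively, you can transport idempotents directly along the ring isomorphism \(A^G=H^0\Hom(\O_\Y,\O_\Y)\simeq H^0\Hom(\S_\Y,\S_\Y)\) induced by \(-\otimes\S_\Y\).

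Your alternative ending is a genuinely different route. It shows directly that \(\S_\Y\) is the injective hull of the simple object \(A/\fm\), using the \(G\)--Artinian property and \(\Soc_G(\S_\Y)\simeq i_\ast\O_{BG}\), and then deduces indecomposability. This reverses the paper's order, since the paper proves \Cref{cor:hull} from indecomposability. It avoids the reflexivity and full-faithfulness input entirely. The price is that it relies on Artinianity of \(\S_\Y\), which the paper establishes only inside the proof of \Cref{lem:supp_s}, not in its statement.
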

  \begin{proof}
    Since \(\usHom(\S_\Y, \S_\Y) \simeq \O_\Y\), if \(\S_\Y\)
    were decomposable then \(\O_\Y\) would be too. If \(\O_\Y\)
    were decomposable, \(A\) would contain a non-trivial,
    homogeneous idempotent
    which must be in \(A^G\). However, \(A^G\) is local and
    so cannot contain a non-trivial idempotent.
  \end{proof}

  \begin{cor}\label{cor:hull}
    In the abelian category \(\heart{\QCoh(\Y)}\), 
    \(\S_\Y\) is the injective hull of \(i_\ast \O_{BG}\).
  \end{cor}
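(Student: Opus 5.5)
We need to show that \(\S_\Y\) is the injective hull of \(i_\ast \O_{BG}\) in \(\heart{\QCoh(\Y)}\). The plan is to realise \(\S_\Y\) as an injective essential extension of \(i_\ast \O_{BG}\), using the \(G\)--socle formalism of \Cref{sec:socle} together with indecomposability.

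First, I would establish the structure of the socle. By \Cref{prop:s_inj}, \(\S_\Y\) is an injective object of \(\heart{\QCoh(\Y)}\). The proof of \Cref{lem:supp_s} shows that \(\S_\Y\) is \(G\)--Artinian. Hence, by \Cref{sec:socle}, \(\S_\Y\) is an essential extension of its \(G\)--socle \(\Soc_G(\S_\Y) \simeq A/\fm \otimes_k V\) for some finite dimensional representation \(V\) of \(G\). Since \(\S_\Y\) is already injective, it is the injective hull of its socle.

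By \Cref{sec:socle}, that injective hull is \(E \otimes_k V\), where \(E\) is the injective hull of \(A/\fm = i_\ast \O_{BG}\). So it remains to pin down \(V\). I would compute the socle directly:
\begin{displaymath}
  \Soc_G(\S_\Y) = \usHom_{\C}(i_\ast \O_{BG}, \S_\Y) \simeq \heart{\Dmat(i_\ast \O_{BG})} \simeq i_\ast \O_{BG},
\end{displaymath}
using the computation \(\Dmat(i_\ast \O_{BG}) \simeq i_\ast \O_{BG}\) from the proof of \Cref{lem:supp_s}, which came from \Cref{prop:serre_under_morphism}. This forces \(V \simeq k\) to be the trivial one-dimensional representation.

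There is a subtlety here. The internal Hom \(\uHom_{\QCoh(\Y)}(i_\ast \O_{BG}, \S_\Y)\) has \(H^0\) equal to the abelian internal Hom, and t-exactness of \(\Dmat\) (\Cref{prop:s_inj}) guarantees it is concentrated in degree \(0\). So the identification above is legitimate: \(\Soc_G(\S_\Y)\) is \(A/\fm\) with trivial \(G\)--action. Consequently \(\S_\Y \simeq E\), as required.

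As a fallback, or as a cross-check, I can use indecomposability instead. Since \(\S_\Y \simeq E \otimes_k V\) and \(V\) decomposes into irreducibles because \(G\) is reductive, the preceding proposition (\(\S_\Y\) indecomposable) forces \(V\) to be irreducible. The socle computation then identifies \(V\) as trivial. In either route, the main obstacle is the identification of the socle with \(i_\ast \O_{BG}\) as an equivariant module, including the trivial \(G\)--action, rather than merely up to some twist \(V_\lambda\). This rests on applying \Cref{prop:serre_under_morphism} to \(i\) and on the fact that \(\S_{BG} \simeq \O_{BG}\), which holds since \(\Rep(G)\) is semisimple with \(\Hom(k,k)=k\).
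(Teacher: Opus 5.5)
Your proof is correct, but it gets essentiality by a different mechanism from the paper. Both arguments rest on the same computation, \(\usHom(i_\ast \O_{BG}, \S_\Y) \simeq H^0 \Dmat(i_\ast \O_{BG}) \simeq i_\ast \O_{BG}\). This comes from \Cref{prop:serre_under_morphism}, \(\S_{BG} \simeq \O_{BG}\) and t-exactness of \(\Dmat\).

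The paper uses this only to produce a monomorphism \(i_\ast \O_{BG} \hto \S_\Y\), by applying \(\usHom(-, \S_\Y)\) to the epimorphism \(\O_\Y \tto i_\ast \O_{BG}\). Injectivity then makes the injective hull of \(i_\ast \O_{BG}\) a direct summand of \(\S_\Y\). The preceding indecomposability proposition, which rests on \(\usHom(\S_\Y, \S_\Y) \simeq \O_\Y\) and locality of \(A^G\), forces equality.

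You instead read the same computation as identifying the whole \(G\)--socle of \(\S_\Y\). You then get essentiality from the \(G\)--Artinian property of \(\S_\Y\) via \Cref{sec:socle}. This makes indecomposability unnecessary, so your fallback is redundant. It also yields the sharper fact that the socle is exactly one copy of \(i_\ast \O_{BG}\). The paper's route, by contrast, needs nothing about the support or finiteness of \(\S_\Y\) beyond injectivity.

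One remark on what you import. In the proof of \Cref{lem:supp_s}, the claim that the nonzero subquotient \(N\) contains a copy of \(A/\fm\) is not justified at that point. However, the implication actually needed, \(\Dmat(N) \simeq 0 \Rightarrow N \simeq 0\), follows directly from \(\Hom_{\QCoh(\Y)}(\O_\Y(\lambda), \Dmat(N)) \simeq \dual{\Hom_{\QCoh(\Y)}(\O_\Y(\dual{\lambda}), N)}\), as in the proof of \Cref{prop:s_inj}. So the Artinian property you cite does hold.
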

  \begin{proof}
    Since we identify \(i_\ast \O_{BG}\) with \(A/\fm\)
    where \(\fm\) is the unique, maximal \(G\)--ideal
    of \(A\), we see that the map \(\O_\Y \to i_\ast \O_{BG}\)
    is an epimorphism in \(\heart{\QCoh(\Y)}\).
    Thus \(\usHom(i_\ast \O_{BG}, \S_\Y) \to \S_\Y\)
    is a monomorphism.

    As \(\S_\Y\) is injective, \(\usHom(i_\ast \O_{BG}, \S_\Y)
    \simeq \uHom(i_\ast \O_{BG}, \S_\Y)\) which is
    \(i_\ast \O_{BG}\) by \Cref{prop:serre_under_morphism}.
    Thus we have a monomorphism \(i_\ast \O_{BG} \hto \S_\Y\).
    Since \(\S_\Y\) is injective, this means \(i_\ast \O_{BG}\)
    is a direct summand of \(\S_\Y\) but, by the result above,
    \(\S_\Y\) is indecomposable so \(\S_\Y\) is an essential
    extension of \(i_\ast \O_{BG}\). Therefore, it
    is the injective hull.
  \end{proof}

  \ssec{Local and Matlis duality}

  Using the description of \(\S_\Y\) given
  by \Cref{thm:serre_functor},
  we will now go on to prove \Cref{thm:matlis_local_duality}
  which relates \(\Dmat\) with classical Serre duality.
  First, let us recall some notation:

  \begin{notat}
    We write \(\Dse: \op{\QCoh(\Y)} \to \QCoh(\Y)\) for the functor
    \(\uHom_{\QCoh(\Y)}(-, \Psi(\omega_\Y))\).
  \end{notat}

  The main obstacle in proving \Cref{thm:matlis_local_duality} 
  is showing that, if \(\F \in \Coh(\Y)\),
  then \(\Dmat(\localcoh \F) \in \Coh(\Y)\).
  For this, we need to prove that \(\uHom_{\QCoh(\Y)}(\F, \Dmat(\localcoh \O_\Y))\)
  lies in \(\Coh(\Y)\) which follows if we show
  the cohomology of \(\Dmat(\localcoh \O_\Y)\) is coherent
  and, also, that \(\uHom_{\QCoh(\Y)}(-, \Dmat(\localcoh \O_\Y))\)
  is bounded above when restricted to \(\heart{\Coh(\Y)}\)
  (see, for instance, \cite[Proposition II.7.20]{hartshorne:1966} for details).
  Before we do this, we need the following lemma:

  \begin{lem}\label{lem:underlying_loc}
    Let \(q: \Spec A \to \Y\) be the natural map. For \(\F \in \QCoh(\Y)\),
    we have \(q^\ast \localcoh \F \simeq \Gamma_{\fm} q^\ast \F \).
  \end{lem}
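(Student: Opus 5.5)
The plan is flat base change along \(q\), combined with the description of \(\localcoh\) as a fibre. Write \(U = \Spec A - \{x\}\) for the complement of the fixed point, with open immersion \(j_U: U \hto \Spec A\). Then \(q^{-1}(\punct{\Y}) = U\), and \(U \to \punct{\Y}\) is again the quotient map by \(G\). So we have a cartesian square
\begin{displaymath}
  \begin{array}{ccc}
    U & \xrightarrow{j_U} & \Spec A \\
    \downarrow & & \downarrow q \\
    \punct{\Y} & \xrightarrow{j} & \Y
  \end{array}
\end{displaymath}
in which \(q\) is flat (it is a \(G\)-torsor) and \(j\) is quasi-compact and schematic. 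Indeed \(j\) is an open immersion, and \(\punct{\Y}\) is quasi-compact because \(\Spec A\) is Noetherian.

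Step one is base change. Flat base change for quasi-coherent sheaves along this square gives \(q^\ast j_\ast j^\ast \F \simeq j_{U,\ast} j_U^\ast q^\ast \F\) for all \(\F \in \QCoh(\Y)\). This holds in the unbounded derived setting because \(j\) is quasi-compact and quasi-separated, so \(j_\ast\) is continuous and base change can be checked after passing to the atlas. Moreover the isomorphism is compatible with the unit maps \(\F \to j_\ast j^\ast \F\) and \(q^\ast \F \to j_{U,\ast} j_U^\ast q^\ast \F\). I would verify this compatibility by noting that the base change map is built from units and counits of the \((j^\ast, j_\ast)\) adjunction.

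Step two uses that \(q^\ast\) is exact, so it preserves fibres:
\begin{displaymath}
  q^\ast \localcoh \F \simeq \ker\big(q^\ast \F \to j_{U,\ast} j_U^\ast q^\ast \F\big).
\end{displaymath}
It then remains to identify this fibre with \(\Gamma_\fm q^\ast \F\). Here \(\Gamma_\fm\) is the derived local cohomology at the maximal ideal \(\fm\). Recall that \(\fm\) is an honest maximal ideal, since the closed orbit is a fixed point. On an affine Noetherian scheme, the right adjoint to the inclusion of complexes supported on \(V(\fm)\) is \(\ker(\id \to j_{U,\ast} j_U^\ast)\). This is the standard Mayer--Vietoris or Čech description \(\Gamma_\fm M \simeq \colim_n \uHom_A(A/\fm^n, M)\). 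It can be seen by checking both the adjunction property and the vanishing on \(j_{U,\ast}\)-objects.

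The main obstacle I expect is the precise justification of base change for unbounded \(\F\) together with naturality of the units. If needed, I would reduce to \(\F\) bounded below, or to compact generators \(\O_\Y(\lambda)\). Both sides are continuous: \(j_\ast\) is continuous, and \(\Gamma_\fm\) commutes with colimits. So it suffices to check the statement on \(\O_\Y(\lambda) \otimes \F'\). Alternatively, one can use the projection formula \(\localcoh \F \simeq \localcoh \O_\Y \otimes \F\) to reduce to \(\F = \O_\Y\), where the claim becomes the classical computation of \(q^\ast j_\ast \O_{\punct{\Y}} = \Gamma(U, \O_U)\) via the Čech complex on \(G\)-invariant generators of \(\fm\) up to radical.
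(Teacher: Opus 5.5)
Your argument is correct and is essentially the paper's proof: the same cartesian square over the open complement \(U\), base change \(q^\ast j_\ast j^\ast \F \simeq s_\ast s^\ast q^\ast \F\) (your \(j_{U,\ast} j_U^\ast q^\ast \F\)), exactness of \(q^\ast\), and the identification \(\Gamma_\fm q^\ast \F \simeq \ker(q^\ast \F \to s_\ast s^\ast q^\ast \F)\) with all maps coming from adjunctions. One caution about your optional closing aside: in general \(\fm\) has no \(G\)-invariant generators up to radical, because \(\fm \cap A^G\) is the maximal ideal of the Artinian local ring \(A^G\) and so consists of nilpotents; drop that remark, which the main argument does not need since base change already holds for every \(\F\).
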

  \begin{proof}
    By the definition of \(\localcoh\), 
    \begin{displaymath}
      q^\ast \localcoh \F \simeq \ker(q^\ast \F \to q^\ast j_\ast j^\ast \F).
    \end{displaymath}
    Let \(s: U \hto \Spec A\) be the open complement of the
    fixed point of the \(G\)--action and let \(r: U \to \punct{\Y}\)
    be the quotient map. Then note that
    \begin{displaymath}
      \begin{tikzcd}
        U \arrow[r, "r"] \arrow[d, "s"] & \punct{\Y} \arrow[d, "j"] \\
        \Spec A \arrow[r, "q"]                          & \Y
      \end{tikzcd}
    \end{displaymath}
    is Cartesian so \(q^\ast j_\ast \simeq s_\ast r^\ast\) and thus
    \(q^\ast j_\ast j^\ast \F \simeq s_\ast r^\ast j^\ast \F \simeq s_\ast s^\ast q^\ast \F\).
    
    Now,
    \begin{displaymath}
      \Gamma_{\fm} q^\ast \F \simeq \ker(q^\ast \F \to s_\ast s^\ast q^\ast \F)
    \end{displaymath}
    and, since all maps are from adjunction, this proves the claim.
  \end{proof}

  \begin{lem}\label{lem:dloc_o_fg}
    The cohomology of \(\Dmat(\localcoh \O_\Y)\) is finitely generated.
  \end{lem}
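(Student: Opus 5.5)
The plan is to use t-exactness of \(\Dmat\) (\Cref{prop:s_inj}) to reduce to a statement about the abelian category \(\heart{\QCoh(\Y)}\), and then to run the classical "Artinian modules have finitely generated Matlis duals" argument in the \(G\)--equivariant setting. Since \(\Dmat\) is t-exact and contravariant, it sends \(H^{n}\) to \(H^{-n}\), so
\begin{displaymath}
  H^{n}(\Dmat(\localcoh \O_\Y)) \simeq \Dmat\bigl(H^{-n}(\localcoh \O_\Y)\bigr).
\end{displaymath}
It therefore suffices to show two things: each \(H^{m}(\localcoh \O_\Y)\) is \(G\)--Artinian, and \(\Dmat\) sends \(G\)--Artinian objects of the heart to finitely generated ones. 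On the heart, \(\Dmat\) agrees with \(\usHom(-,\S_\Y)\), because \(\S_\Y\) is injective.

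\textbf{Step 1: Artinianness.} By \Cref{lem:underlying_loc}, \(q^\ast H^m(\localcoh \O_\Y) \simeq H^m_\fm(A)\), which is the classical local cohomology of \(A\) at the maximal ideal \(\fm\). Recall that \(\fm\) is maximal as an ordinary ideal because the closed orbit is a fixed point. By the classical theorem, \(H^m_\fm(A)\) is an Artinian \(A\)--module. Every \(G\)--submodule is in particular an \(A\)--submodule, and \(q^\ast\) is faithful on the heart. Hence any descending chain of \(G\)--submodules of \(H^m(\localcoh\O_\Y)\) stabilises, so the object is \(G\)--Artinian.

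\textbf{Step 2: equivariant Matlis.} Let \(M\) be \(G\)--Artinian.

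First, I build an embedding of \(M\) into finitely many copies of \(\S_\Y\). By \Cref{sec:socle}, \(\Soc_G(M)\simeq A/\fm\otimes_k V\) with \(V\) finite dimensional, and \(M\) is an essential extension of it. By \Cref{cor:hull}, \(\S_\Y\) is the injective hull of \(A/\fm\), so the injective hull of \(M\) is \(\S_\Y\otimes_k V\). This gives a monomorphism \(M\hto \S_\Y\otimes V\).

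Next, I control the cokernel. The cokernel is a quotient of \(\S_\Y\otimes V\), and \(\S_\Y\) is \(G\)--Artinian by the proof of \Cref{lem:supp_s}. Hence the cokernel is again \(G\)--Artinian, and the same argument embeds it into some \(\S_\Y\otimes W\) with \(W\) finite dimensional. This yields an exact sequence
\begin{displaymath}
  0\to M\to \S_\Y\otimes V\to \S_\Y\otimes W.
\end{displaymath}

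Now I apply \(\Dmat\), which is exact on the heart. Using \(\Dmat(\S_\Y)\simeq\O_\Y\), we get
\begin{displaymath}
  \Dmat(\S_\Y\otimes V)\simeq \dual{V}\otimes \O_\Y,
\end{displaymath}
and similarly for \(W\). The sequence becomes \(\dual W\otimes\O_\Y\to\dual V\otimes\O_\Y\to\Dmat(M)\to 0\), so \(\Dmat(M)\) is a quotient of a finitely generated \(G\)--free module and is therefore finitely generated.

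Applying Step 2 to \(M=H^{-n}(\localcoh\O_\Y)\), with Artinianness supplied by Step 1, proves the lemma.

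\textbf{Main obstacle.} The delicate point is Step 2, specifically the facts it borrows from \Cref{lem:supp_s} and \Cref{cor:hull}. I must check that the injective hull of \(A/\fm\otimes V\) really is \(\S_\Y\otimes V\); this holds because \(-\otimes V\) preserves injectivity and essentiality for finite dimensional \(V\). I also need that quotients of \(\S_\Y\otimes V\) remain \(G\)--Artinian, which follows since \(\S_\Y\) itself is \(G\)--Artinian.
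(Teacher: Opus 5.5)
Your proposal is correct and follows the paper's proof. In both, t-exactness of \(\Dmat\) reduces the claim to the cohomology objects (the paper phrases this as a degenerate spectral sequence), and \(G\)--Artinianness of \(H^b(\localcoh\O_\Y)\) comes from \Cref{lem:underlying_loc} plus classical local cohomology. The socle/injective-hull embedding \(M\hto\S_\Y\otimes\pi^\ast V\) then dualises to a surjection \(\pi^\ast\dual{V}\tto\Dmat(M)\).

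The only difference is your copresentation step. It is superfluous, since the surjection alone already gives finite generation. Its justification that quotients of \(\S_\Y\otimes V\) are \(G\)--Artinian ``since \(\S_\Y\) is'' is also too quick for non-trivial \(V\), because the \(G\)--action is diagonal. It can be repaired by rerunning the argument of \Cref{lem:supp_s} with the Noetherian object \(\Dmat(\S_\Y\otimes\pi^\ast V)\simeq\pi^\ast\dual{V}\), but simply dropping the step is cleaner.
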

  \begin{proof}
    Observe that, since \(\S_\Y \in \heart{\QCoh(\Y)}\),
    we have a spectral sequence
    \begin{displaymath}
      E_2^{a,b} = H^a(\uHom_{\QCoh(\Y)}(H^{-b}(\localcoh\O_\Y), \S_\Y))
      \implies H^{a+b}(\uHom_{\QCoh(\Y)}(\localcoh\O_\Y, \S_\Y)).
    \end{displaymath}
    By \Cref{prop:s_inj},
    \(\uHom_{\QCoh(\Y)}(H^{-b}(\localcoh \O_\Y), \S_\Y)
    \simeq \usHom_{\QCoh(\Y)}(H^{-b}(\localcoh \O_\Y), \S_\Y))\),
    so it is enough to show \(\usHom_{\QCoh(\Y)}(H^{b}(\localcoh\O_\Y), \S_\Y))\)
    is finitely generated for all \(b\).

    Assume for now that \(H^b(\localcoh\O_\Y)\) is \(G\)--Artinian
    as an object of \(\heart{\QCoh(\Y)}\). Then, by \Cref{sec:socle}
    and \Cref{cor:hull}, we have a monomorphism
    \(H^b(\localcoh \O_\Y) \hto \S_\Y \otimes \pi^\ast V\)
    for some \(V \in \Rep(G)\) finite dimensional. Applying
    \(\Dmat\) yields an epimorphism
    \begin{displaymath}
      \pi^\ast \dual{V} \to \usHom_{\QCoh(\Y)}(H^b(\localcoh \O_\Y),\S_\Y)
    \end{displaymath}
    and hence it is finitely generated.

    Therefore, we just need to show \(H^b(\localcoh \O_\Y)\)
    is \(G\)--Artinian. As \(q^\ast\) is t-exact, from \Cref{lem:underlying_loc},
    \begin{displaymath}
      q^\ast H^b(\localcoh \O_\Y) \simeq H^b(\Gamma_\fm A)
    \end{displaymath}
    and this is an Artinian \(A\)--module (localise and 
    use \cite[Proposition 3.5.4]{bruns_herzog:1998}) which proves the claim.
  \end{proof}

  \begin{lem}\label{lem:fid}
    There exists \(n \in \Z\) such that, for all \(\F \in \heart{\QCoh(\Y)}\),
    one has \(\Dmat(\localcoh \F) \in \QCoh(\Y)^{[-n,n]}\).
  \end{lem}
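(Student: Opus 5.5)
The plan is to reduce the statement to a uniform bound on the cohomological amplitude of \(\localcoh\). Then we transport this bound through \(\Dmat\) using its t-exactness from \Cref{prop:s_inj}.

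\emph{Step 1: amplitude of \(\localcoh\) on the heart.} Fix \(\F \in \heart{\QCoh(\Y)}\). By \Cref{lem:underlying_loc} we have \(q^\ast \localcoh \F \simeq \Gamma_\fm q^\ast \F\). Here \(\fm \subseteq A\) is the maximal ideal of the fixed point; it is an honest maximal ideal by our standing assumptions. Since \(A\) is Noetherian, \(\fm = (f_1, \dots, f_r)\) for some finite \(r\). The derived local cohomology \(\Gamma_\fm\) on \(A\)--modules is then computed by the extended Čech complex on \(f_1, \dots, f_r\). That complex is concentrated in degrees \([0,r]\), so for any \(A\)--module \(M\) in the heart, \(\Gamma_\fm M \in \QCoh(\Spec A)^{[0,r]}\). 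Equivalently, \(s_\ast s^\ast\) on the punctured spectrum \(U\) has amplitude \([0, r-1]\), because \(U\) is covered by the \(r\) affines \(D(f_i)\). Since \(q^\ast\) is t-exact and conservative (it is the forgetful functor), we get \(H^b(\localcoh \F) = 0\) for \(b \notin [0,r]\). Crucially, \(r\) depends only on \(A\), not on \(\F\).

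\emph{Step 2: pass through \(\Dmat\).} By \Cref{prop:s_inj}, \(\Dmat = \uHom_{\QCoh(\Y)}(-, \S_\Y)\) is t-exact as a contravariant functor. It sends \(\QCoh(\Y)^{\le 0}\) to \(\QCoh(\Y)^{\ge 0}\) and \(\QCoh(\Y)^{\ge 0}\) to \(\QCoh(\Y)^{\le 0}\), so that \(H^a(\Dmat X) \simeq \Dmat(H^{-a} X)\). Write \(X = \localcoh \F\), which is bounded. Filtering \(X\) by its finitely many truncations and using exactness of \(\Dmat\) on the resulting cofibre sequences, each \(H^{-a}X\) in the heart contributes \(\Dmat(H^{-a}X)\) in degree \(a\). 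Hence \(\Dmat(\localcoh \F) \in \QCoh(\Y)^{[-r,0]} \subseteq \QCoh(\Y)^{[-n,n]}\) with \(n = r\).

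The only point requiring care is Step 1: the bound must be uniform in \(\F\), including non-coherent \(\F\). This is exactly why I would go through the Čech description of \(\Gamma_\fm\) on \(\Spec A\) rather than argue on \(\Y\) directly. Alternatively, one could bound the amplitude of \(j_\ast\) by the same affine cover pulled back along \(q\), using the Cartesian square in the proof of \Cref{lem:underlying_loc}. Either way, the statement reduces to the finite cohomological dimension of the quasi-affine scheme \(U\).
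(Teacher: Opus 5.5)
Your proposal is correct and follows essentially the same route as the paper. Both arguments use the t-exactness of \(\Dmat\) from \Cref{prop:s_inj} and the t-exact, conservative \(q^\ast\) together with \Cref{lem:underlying_loc} to reduce to a uniform amplitude bound on \(\Gamma_\fm q^\ast \F\), which comes from the finite cohomological dimension of \(U\). The only cosmetic difference is that you get the bound from the \v{C}ech complex on \(r\) generators of \(\fm\), giving amplitude \([0,r]\), whereas the paper uses the vanishing of \(\Gamma(U,-)\) above \(\dim U\).
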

  \begin{proof}
    By \Cref{prop:s_inj},
    all we need is the cohomology
    of \(\localcoh \F\) to be bounded.
    It is enough to check \(q^\ast \localcoh \F\)
    has bounded cohomology as \(q^\ast\)
    is t-exact and conservative.
    From \Cref{lem:underlying_loc},
    \(q^\ast \localcoh \F \simeq \ker(q^\ast \F \to s_\ast s^\ast q^\ast \F)\),
    using the notation of \Cref{lem:underlying_loc}.
    Note then we just want to show \(s_\ast s^\ast q^\ast \F\)
    has bounded cohomology.
    However, \(\Gamma(\Spec A, -)\) is exact and conservative
    so this is equivalent to checking 
    \begin{displaymath}
      \Gamma(\Spec A, s_\ast r^\ast j^\ast \F) \simeq \Gamma(U, r^\ast j^\ast \F)
    \end{displaymath}
    is cohomologically bounded but this clearly lies in \(\Vect^{[0, \dim U]}\).
  \end{proof}

  \begin{prop}\label{prop:dloc_coh}
    If \(\F \in \Coh(\Y)\), then \(\Dmat(\localcoh \F) \in \Coh(\Y)\).
  \end{prop}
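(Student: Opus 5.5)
Set \(\mathcal{K} = \Dmat(\localcoh \O_\Y)\). By the projection formula \(\localcoh \F \simeq \localcoh \O_\Y \otimes \F\), so tensor--hom adjunction gives
\begin{displaymath}
  \Dmat(\localcoh \F) \simeq \uHom_{\QCoh(\Y)}(\F \otimes \localcoh \O_\Y, \S_\Y) \simeq \uHom_{\QCoh(\Y)}(\F, \mathcal{K}).
\end{displaymath}
The claim therefore reduces to two things, as indicated before \Cref{lem:underlying_loc}: \(\mathcal{K}\) has coherent cohomology sheaves, and \(\uHom_{\QCoh(\Y)}(-, \mathcal{K})\) has bounded cohomological amplitude on \(\heart{\Coh(\Y)}\). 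Then the standard way-out argument (as in \cite[Proposition II.7.20]{hartshorne:1966}) gives \(\uHom_{\QCoh(\Y)}(\F, \mathcal{K}) \in \Coh(\Y)\) for all \(\F \in \Coh(\Y)\).

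For the first point, \Cref{lem:dloc_o_fg} gives that each \(H^b(\mathcal{K})\) is finitely generated. For boundedness, \Cref{lem:fid} applied to \(\F = \O_\Y\) shows \(\mathcal{K} \in \QCoh(\Y)^{[-n,n]}\), so \(\mathcal{K}\) is a bounded complex with coherent cohomology, i.e. \(\mathcal{K} \in \Coh(\Y)\). For \(\F \in \heart{\Coh(\Y)}\), the identification above and \Cref{lem:fid} again give \(\uHom(\F, \mathcal{K}) \simeq \Dmat(\localcoh \F) \in \QCoh(\Y)^{[-n,n]}\) with \(n\) uniform in \(\F\). So \(\uHom(-,\mathcal{K})\) has finite amplitude on the heart, and \(\mathcal{K}\) behaves like a complex of finite injective dimension.

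Now take \(\F \in \Coh(\Y)\) and argue by induction on the number of nonzero cohomology sheaves of \(\F\), using the truncation triangles \(\tau^{\le m}\F \to \F \to \tau^{>m}\F\). This reduces us to \(\F \in \heart{\Coh(\Y)}\). For such \(\F\), choose a resolution \(\cdots \to P_1 \to P_0 \to \F\) by \(G\)-free modules \(\pi^\ast V_i\) with \(V_i\) finite dimensional, which is possible since \(A\) is Noetherian. For each \(i\), \(\uHom(\pi^\ast V_i, \mathcal{K}) \simeq \pi^\ast \dual{V_i} \otimes \mathcal{K}\) is coherent. Because \(\uHom(\F, \mathcal{K})\) is uniformly bounded, to compute \(H^b\) we may replace \(\F\) by the stupid truncation \(\sigma^{\ge -N}P_\bullet\) for \(N \gg 0\). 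The discrepancy is a shift of a syzygy module, again in the heart, whose \(\uHom\) into \(\mathcal{K}\) lives in degrees \(\ge N - n\) and so does not affect \(H^b\). The truncation is a finite extension of coherent objects, so each \(H^b(\uHom(\F,\mathcal{K}))\) is coherent, and boundedness holds by \Cref{lem:fid}.

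The main obstacle I expect is this last way-out step. One has to check carefully that the degree shift from the syzygy really pushes its contribution out of the relevant range, using the uniform \(n\) of \Cref{lem:fid} applied to the syzygy, which is a heart object. One also has to justify that internal \(\uHom\) from a perfect object into a bounded coherent object is coherent, which holds because compact objects of \(\QCoh(\Y)\) are dualisable.
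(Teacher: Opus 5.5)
Your proposal is correct and follows the paper's proof. You make the same reduction via the projection formula to \(\mathcal{K} = \Dmat(\localcoh \O_\Y)\), and you get coherence of its cohomology from \Cref{lem:dloc_o_fg} and uniform boundedness from \Cref{lem:fid}. The only difference is that you write out the way-out argument (induction on the number of cohomology sheaves, then truncated \(G\)-free resolutions with the syzygy term pushed out of range), which the paper simply cites from Hartshorne.
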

  \begin{proof}
    Since \(\localcoh \F \simeq \localcoh \O_\Y \otimes \F\), we see that
    \begin{displaymath}
      \Dmat(\localcoh \F) \simeq \uHom_{\QCoh(\Y)}(\F, \Dmat(\localcoh \O_\Y)).
    \end{displaymath}
    Thus, all we need to prove is that
    \begin{itemize}
      \item the cohomology of \(\Dmat(\localcoh \O_\Y)\) is coherent;
      \item and the functor \(\uHom_{\QCoh(\Y)}(-, \Dmat(\localcoh \O_\Y))\)
        is bounded above
    \end{itemize}
    (see \cite[Proposition II.7.20]{hartshorne:1966} for details).
    However, we proved these in \Cref{lem:dloc_o_fg} and
    \Cref{lem:fid}.
  \end{proof}

  \sssec{}

  We can now prove the first part of \Cref{thm:matlis_local_duality}:

  \begin{prop}\label{prop:dmat_coh_se}
    For all \(\F \in \Coh(\Y)\), there is a natural equivalence \(\Dmat(\localcoh(\F)) \simeq \Dse(\F)[\dim G]\).
  \end{prop}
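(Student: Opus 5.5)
The plan follows the outline in the introduction. By \Cref{thm:serre_functor}, \(\S_\Y \simeq \localcoh \Psi(\omega_\Y)[\dim G]\). Since \(\localcoh\) is right adjoint to the inclusion \(\QCoh(\Y)_{BG} \hto \QCoh(\Y)\), and \(\localcoh \F\) lies in \(\QCoh(\Y)_{BG}\), the internal hom satisfies
\begin{displaymath}
  \Dmat(\localcoh \F) \simeq \uHom_{\QCoh(\Y)}(\localcoh \F, \localcoh \Psi(\omega_\Y))[\dim G] \simeq \uHom_{\QCoh(\Y)}(\localcoh \F, \Psi(\omega_\Y))[\dim G].
\end{displaymath}
For the internal version, test against \(\cG \otimes \localcoh\F\) with \(\cG \in \Perf(\Y)\); this object still lies in \(\QCoh(\Y)_{BG}\) because \(j^\ast\) is monoidal. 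Applying \(\uHom(-,\Psi(\omega_\Y))\) to the fibre sequence \(\localcoh \F \to \F \to j_\ast j^\ast \F\) gives a fibre sequence
\begin{displaymath}
  \Dse(j_\ast j^\ast \F) \to \Dse(\F) \to \Dmat(\localcoh \F)[-\dim G].
\end{displaymath}
It therefore suffices to show \(\Dse(j_\ast j^\ast \F) \simeq 0\).

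The first step is to show \(\Dse(j_\ast j^\ast \F) \in \Coh(\Y)\). By \Cref{prop:dmat_coh_se}'s predecessor \Cref{prop:dloc_coh}, \(\Dmat(\localcoh \F) \in \Coh(\Y)\). Since \(\F \in \Coh(\Y)\), Serre duality gives \(\Dse(\F) \in \Coh(\Y)\). Because \(\Coh(\Y)\) is closed under fibres, the sequence above forces \(\Dse(j_\ast j^\ast \F) \in \Coh(\Y)\).

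The second step is to apply \Cref{prop:k_generates}: it is enough to show \(\uHom_{\QCoh(\Y)}(i_\ast \O_{BG}, \Dse(j_\ast j^\ast\F)) \simeq 0\). By tensor-hom adjunction this equals \(\uHom(i_\ast\O_{BG} \otimes j_\ast j^\ast \F, \Psi(\omega_\Y))\). By the projection formula for \(j\) (quasi-compact and schematic), \(i_\ast \O_{BG} \otimes j_\ast j^\ast \F \simeq j_\ast(j^\ast i_\ast \O_{BG} \otimes j^\ast \F)\), and \(j^\ast i_\ast \O_{BG} \simeq 0\) since \(i_\ast\O_{BG}\) is supported at the fixed point. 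Hence the hom vanishes, so \(\Dse(j_\ast j^\ast\F) \simeq 0\), and the equivalence follows. Naturality in \(\F\) is automatic because every map used comes from adjunctions.

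The main obstacle is the coherence step. The vanishing of \(\uHom(i_\ast\O_{BG}, -)\) alone does not kill a general quasicoherent object (for example \(j_\ast\) of something), so \Cref{prop:k_generates} is available only after coherence is known. That coherence rests entirely on \Cref{prop:dloc_coh}. The one point to check carefully is that \(\Dse(\F)\) is coherent for coherent \(\F\). This is the statement that \(\Dse\) is an involution of \(\Coh(\Y)\) as recalled in the preliminaries, with \(\Psi\) identifying \(\Coh\) inside both \(\IndCoh\) and \(\QCoh\). I would also double-check that the tensor identity \(i_\ast\O_{BG}\otimes j_\ast(-) \simeq 0\) holds in \(\QCoh\) without any boundedness hypotheses; the projection formula suffices for this.
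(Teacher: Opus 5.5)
Your proposal is correct and is essentially the paper's proof: use \Cref{thm:serre_functor} and the adjunction for \(\localcoh\) to reduce to showing that \(\Dse\) of the \(j_\ast j^\ast\)-term vanishes, get its coherence from \Cref{prop:dloc_coh}, and then kill it with \Cref{prop:k_generates} via the projection formula and \(j^\ast i_\ast \O_{BG} \simeq 0\). The only cosmetic difference is that the paper first reduces to \(\F = \O_\Y\) using \(\localcoh \F \simeq \localcoh \O_\Y \otimes \F\) and shows \(\Dse(j_\ast \O_{\punct{\Y}}) \simeq 0\), whereas you handle \(\Dse(j_\ast j^\ast \F)\) directly for each coherent \(\F\).
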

  \begin{proof}
    Observe that 
    \begin{displaymath}
      \uHom_{\QCoh(\Y)}(\localcoh \F, \S_\Y) \simeq \uHom_{\QCoh(\Y)}(\F, \uHom_{\QCoh(\Y)}(\localcoh \O_\Y, \S_\Y))
    \end{displaymath}
    since \(\localcoh \F \simeq \localcoh \O_\Y \otimes \F\). Therefore, it
    is enough to show \(\Dmat(\localcoh \O_\Y) \simeq \Psi(\omega_\Y)[\dim G]\).
    From \Cref{thm:serre_functor}, we know \(\S_\Y \simeq \localcoh \Psi(\omega_\Y)[\dim G]\)
    and therefore, as \(\localcoh \O_\Y \in \QCoh(\Y)_{BG}\),
    \begin{displaymath}
      \uHom_{\QCoh(\Y)}(\localcoh \O_\Y, \S_\Y) \simeq \uHom_{\QCoh(\Y)}(\localcoh \O_\Y, \Psi(\omega_\Y)[\dim G]) \simeq \Dse(\localcoh \O_\Y)[\dim G].
    \end{displaymath}

    Now, note
    \begin{displaymath}
      \Dse(\localcoh \O_\Y) \simeq \cone(\Dse(j_\ast \O_{\punct{\Y}}) \to \Psi(\omega_\Y))
    \end{displaymath}
    so the claim is proven if we show \(\Dse (j_\ast \O_{\punct{\Y}})\simeq 0\).
    However, \(\Psi(\omega_\Y) \in \Coh(\Y)\) and \(\Dse(\localcoh \O_\Y) \simeq \Dmat(\localcoh \O_\Y)[-\dim G] \in \Coh(\Y)\)
    by \Cref{prop:dloc_coh} so we also know 
    \begin{displaymath}
      \Dse(j_\ast \O_{\punct{\Y}}) \in \Coh(\Y).
    \end{displaymath}
    We are then done since
    \begin{displaymath}
      \uHom_{\QCoh(\Y)}(i_\ast \O_{BG}, \Dse(j_\ast \O_{\punct{\Y}})) \simeq \uHom_{\QCoh(\Y)}(j_\ast j^\ast i_\ast \O_{BG}, \Psi(\omega_\Y)) \simeq 0
    \end{displaymath}
    by base change and this implies \(\Dse(j_\ast \O_{\punct{\Y}}) \simeq 0\) by \Cref{prop:k_generates}
  \end{proof}

  \sssec{}

  To prove the other part of \Cref{thm:matlis_local_duality},
  we will need a straightforward but
  technical lemma which we record here:

  \begin{lem}\label{lem:way_out}
    Let \(F,G: \Coh(\Y) \to \QCoh(\Y)\) be bounded above functors and \(\eta: F \to G\)
    an arrow between them. Then \(\eta\) is an isomorphism
    if \(F(\O_\Y(\lambda)) \xto{\eta} G(\O_\Y(\lambda))\) is
    for all \(\lambda \in \Lambda_G\).
  \end{lem}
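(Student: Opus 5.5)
This is a way-out-style argument (cf. Hartshorne's Lemma I.7.1). Let \(\mathcal{K} \subseteq \Coh(\Y)\) be the full subcategory of objects \(\F\) for which \(\eta_\F\) is an isomorphism. Since \(F\) and \(G\) are exact functors and \(\eta\) is natural, \(\mathcal{K}\) is a stable subcategory: it is closed under shifts, cones and retracts. By hypothesis it contains every \(\O_\Y(\lambda)\). Because \(\Y\) is QCA it is perfect, so all finite direct sums of twists \(\O_\Y(\lambda)\) lie in \(\mathcal{K}\). In particular, every \(\pi^\ast V\) with \(V \in \cpt{\Rep(G)}\) lies in \(\mathcal{K}\), using complete reducibility of \(G\). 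The task is therefore to pass from these \(G\)--free objects to all of \(\Coh(\Y)\).

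\textbf{Step 1: reduce to the heart.} Every \(\F \in \Coh(\Y)\) is bounded with coherent cohomology. Induction on the number of nonzero cohomology sheaves, together with the truncation triangles, reduces us to \(\F \in \heart{\Coh(\Y)}\), that is, a finitely generated \(G\)--equivariant \(A\)--module \(M\).

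\textbf{Step 2: resolve the heart.} By the discussion of \(G\)--free modules in the preliminaries, there is an epimorphism \(\pi^\ast V_0 \tto M\) with \(V_0\) finite dimensional, since \(M\) is finitely generated. Its kernel is again finitely generated because \(A\) is Noetherian. Iterating gives a resolution
\[
\cdots \to P_2 \to P_1 \to P_0 \to M \to 0
\]
by finite \(G\)--free modules \(P_k = \pi^\ast V_k\). This resolution is generally infinite, so it cannot be used directly.

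\textbf{Step 3: the way-out argument (the main obstacle).} Fix \(N \geq 0\) and let \(K_N\) be the \(N\)-th syzygy. Brutally truncating the resolution gives an exact triangle
\[
\sigma_{\le N-1} P_\bullet \to M \to K_N[N],
\]
where \(\sigma_{\le N-1}P_\bullet\) is a finite iterated extension of the \(P_k[k]\). Hence \(\sigma_{\le N-1}P_\bullet \in \mathcal{K}\).

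Now I use the boundedness hypothesis. I read ``bounded above'' as: there is an integer \(d\) such that for every \(\mathcal{E} \in \heart{\Coh(\Y)}\), both \(F(\mathcal{E})\) and \(G(\mathcal{E})\) lie in \(\QCoh(\Y)^{\le d}\), with compatible bounds under shifts. Then
\[
F(K_N[N]),\ G(K_N[N]) \in \QCoh(\Y)^{\le d-N}.
\]
Comparing the long exact cohomology sequences of the two triangles via the five lemma, \(\eta_M\) induces an isomorphism on \(H^i\) for all \(i > d - N + 1\). Letting \(N \to \infty\) gives an isomorphism on all cohomology sheaves. The t-structure on \(\QCoh(\Y)\) is separated: \(q^\ast\) is t-exact and conservative and the t-structure on \(A\)--modules is separated. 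Therefore \(\eta_M\) is an isomorphism.

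The delicate points are the following.
\begin{itemize}
\item Contravariance. The functors this will be applied to, \(\Dmat(\localcoh -)\) and \(\localcoh \Dse(-)\), are contravariant in \(\F\). In that case shifting by \([N]\) moves the output in the opposite direction, so the hypothesis must be read as ``bounded below'' in the output, and the argument is the mirror image of the one above.
\item Exactness conventions. The statement must be interpreted so that \(F\) and \(G\) turn exact triangles into exact triangles; this is what makes \(\mathcal{K}\) stable in the first place.
\end{itemize}
I would state the convention explicitly and run the argument above in the covariant form, noting that the contravariant case is identical with all inequalities reversed.
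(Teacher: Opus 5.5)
Your argument is correct and is essentially the paper's proof. The paper simply cites Hartshorne's way-out lemma (I.7.1): induction on amplitude, plus the observation that every object of \(\heart{\Coh(\Y)}\) is a quotient of some \(\pi^\ast V\), and your truncated \(G\)--free resolution argument writes out exactly this. Your caveat about contravariance is also right: for the contravariant functors the lemma is later applied to, a \(G\)--free resolution needs a uniform \emph{lower} bound on outputs over the heart, and that bound holds automatically there because \(\Psi(\omega_\Y)\) and \(j_\ast j^\ast \Psi(\omega_\Y)\) are bounded below.
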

  \begin{proof}
    See \cite[I.7.1]{hartshorne:1966}. The argument is
    simply an induction on amplitude. The only thing we
    add is the observation that every \(\F \in \heart{\Coh(\Y)}\)
    is the target of an epimorphism from an object of the
    form \(\pi^\ast V\) with \(V\) a finite dimensional
    representation of \(G\), which can be written
    as a finite direct sum of sheaves of the form \(\O_\Y(\lambda)\).
  \end{proof}

  With this, we can complete the proof:

  \begin{proof}[Proof of \Cref{thm:matlis_local_duality}]
    Take \(\F \in \Coh(\Y)\),
    we know that \(\Dmat(\localcoh \F) \simeq \Dse(\F)[\dim G]\)
    from \Cref{prop:dmat_coh_se}.
    We will prove that \(\Dmat(\F) \simeq \localcoh(\Dse(\F))[\dim G]\)
    from which the rest of the statement will follow since \(\Dse\)
    is an involution on \(\Coh(\Y)\).

    Observe that, as \(\S_\Y \simeq \localcoh \Psi(\omega_\Y)[\dim G]\), we have
    \begin{displaymath}
      \Dmat(\F) \simeq \ker(\Dse(\F) \to \uHom_{\QCoh(\Y)}(\F, j_\ast j^\ast \Psi(\omega_\Y)))[\dim G].
    \end{displaymath}
    By the projection formula, applying \(j_\ast j^\ast\)
    to the canonical map \(\Dse(\F) \otimes \F \to \Psi(\omega_\Y)\)
    yields a morphism
    \begin{displaymath}
      j_\ast j^\ast \Dse(\F) \to \uHom_{\QCoh(\Y)}(\F, j_\ast j^\ast \Psi(\omega_\Y)).
    \end{displaymath}
    We will show this is an isomorphism using \Cref{lem:way_out},
    which requires that both the functors \(j_\ast j^\ast \Dse\) and
    \(\uHom_{\QCoh(\Y)}(-, j_\ast j^\ast \Psi(\omega_\Y))\)
    are bounded above.

    First, see that
    \begin{displaymath}
      \uHom_{\QCoh(\Y)}(-, j_\ast j^\ast \Psi(\omega_\Y)) \simeq \cone(\Dmat[-\dim G] \to \Dse).
    \end{displaymath}
    Both \(\Dmat\) and \(\Dse\) are bounded above by \Cref{prop:s_inj} and \Cref{lem:fid}
    so we conclude that the functor \(\uHom_{\QCoh(\Y)}(-, j_\ast j^\ast \Psi(\omega_\Y))\) is bounded above.
    For \(j_\ast j^\ast \Dse\),
    using the same arguments of \Cref{lem:fid} shows that,
    as \(\Dse\) is bounded above, so is \(j_\ast j^\ast \Dse\).

    Therefore, for \(j_\ast j^\ast \Dse \to \uHom_{\QCoh(\Y)}(-, j_\ast j^\ast \Psi(\omega_\Y))\)
    to be an isomorphism, we need to only check it on \(\O_\Y(\lambda)\)
    where it is clear.

    Thus, the map
    \begin{displaymath}
      \Dse \to \uHom_{\QCoh(\Y)}(-, j_\ast j^\ast \Psi(\omega_\Y))
    \end{displaymath}
    factors through a map
    \begin{displaymath}
      \Dse \to j_\ast j^\ast \Dse
    \end{displaymath}
    and it is clear this is the map from adjunction.
    So we see that
    \begin{displaymath}
      \Dmat(\F) \simeq \ker(\Dse(\F) \to j_\ast j^\ast \Dse(\F))[\dim G] \simeq \localcoh(\Dse(\F))[\dim G]
    \end{displaymath}
    as claimed.
  \end{proof}
  
  \ssec{Serre functor on the puncture}

  To end, we use \Cref{thm:serre_functor} and \Cref{thm:matlis_local_duality}
  to find the Serre functor on \(\QCoh(\punct{\Y})\).

  \begin{prop}\label{prop:serre_puncture}
    The Serre functor on \(\QCoh(\punct{\Y})\) is given by tensoring with \(\Psi(\omega_{\punct{\Y}})[\dim G - 1]\).
  \end{prop}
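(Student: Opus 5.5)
We compute \(\S_{\punct{\Y}}\) by testing against compact generators of \(\QCoh(\punct{\Y})\). Since \(\punct{\Y}\) is QCA and hence perfect, its Serre functor is tensoring with \(\S_{\punct{\Y}}\). It therefore suffices to identify \(\Hom_{\QCoh(\punct{\Y})}(\cG, \S_{\punct{\Y}})\) naturally for \(\cG\in\Perf(\punct{\Y})\). Every perfect complex on \(\punct{\Y}\) is, up to direct summands, of the form \(j^\ast\F\) with \(\F\in\Perf(\Y)\): the restriction \(j^\ast\) is a quotient functor with kernel \(\QCoh(\Y)_{BG}\), which is compactly generated by Thomason–Neeman type arguments. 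So it is enough to check the defining property on the objects \(j^\ast\F\), naturally in \(\F\).

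For such objects we compute
\begin{displaymath}
  \dual{\Hom_{\QCoh(\punct{\Y})}(\O, j^\ast\F)} \simeq \dual{\Hom_{\QCoh(\Y)}(\O_\Y, j_\ast j^\ast \F)} .
\end{displaymath}
The fibre sequence \(\localcoh\F\to\F\to j_\ast j^\ast\F\) shows that \(j_\ast j^\ast\F\simeq \cone(\localcoh\F\to\F)\). Although \(\localcoh\F\) is not compact, we can still dualise by applying \(\Dmat\) and the Serre property as follows. For compact \(\F\), \(\dual{\Hom(\O_\Y,\F)}\simeq\Hom(\F,\S_\Y)=\Gamma(\Y,\Dmat\F)\). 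For \(\localcoh\F\), write it as a colimit of compacts supported at \(BG\); by continuity, \(\dual{\Hom(\O_\Y,\localcoh\F)}\simeq\Gamma(\Y,\Dmat(\localcoh\F))\), using that a limit of duals is the dual of the colimit. Theorem~\ref{thm:matlis_local_duality} then gives \(\Dmat(\F)\simeq\localcoh\Dse(\F)[\dim G]\) and \(\Dmat(\localcoh\F)\simeq\Dse(\F)[\dim G]\). Taking the fibre, and checking that the dualised map is the canonical one \(\localcoh\Dse\F\to\Dse\F\), we obtain
\begin{displaymath}
  \dual{\Hom(\O,j^\ast\F)} \simeq \Gamma\bigl(\Y, \fib(\localcoh\Dse\F\to\Dse\F)\bigr)[\dim G+1]\cdots
\end{displaymath}
More precisely, the dual of a cofibre is the fibre of the duals. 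That fibre is \(j_\ast j^\ast\Dse(\F)[-1]\), so the result is \(\Gamma(\punct{\Y}, j^\ast\Dse\F)[\dim G-1]\).

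Finally, \(j^\ast\Dse(\F)\simeq\uHom(j^\ast\F,\Psi(\omega_{\punct{\Y}}))\), since \(j^\ast\) commutes with internal Hom from perfect objects and \(j^\ast\Psi(\omega_\Y)\simeq\Psi(\omega_{\punct{\Y}})\) because \(j\) is an open immersion. Hence \(\Gamma(\punct{\Y}, j^\ast\Dse\F)\simeq\Hom(j^\ast\F,\Psi(\omega_{\punct{\Y}}))\). This gives the defining property of the Serre functor with \(\S_{\punct{\Y}}\simeq\Psi(\omega_{\punct{\Y}})[\dim G-1]\), and naturality in \(\F\) upgrades it to an identification of objects.

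The main obstacle is the middle step: dualising \(\Hom(\O_\Y,\localcoh\F)\) for the non-compact object \(\localcoh\F\), and keeping track of the maps so that the two dualities assemble into the canonical fibre sequence. If the colimit argument proves delicate, I would instead use Proposition~\ref{prop:serre_under_morphism} applied to \(j\), which is schematic and quasi-compact. This gives \(j_\ast\S_{\punct{\Y}}\simeq\uHom(j_\ast\O_{\punct{\Y}},\S_\Y)=\Dmat(j_\ast\O_{\punct{\Y}})\). Then \(\Dmat\) of the triangle \(\localcoh\O_\Y\to\O_\Y\to j_\ast\O_{\punct{\Y}}\), together with Theorem~\ref{thm:matlis_local_duality}, gives \(j_\ast\S_{\punct{\Y}}\simeq\fib(\localcoh\Psi(\omega_\Y)\to\Psi(\omega_\Y))[\dim G]\simeq j_\ast j^\ast\Psi(\omega_\Y)[\dim G-1]\). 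Applying \(j^\ast\) and using \(j^\ast j_\ast\simeq\id\) concludes directly, and this cleaner route is the one I would actually write.
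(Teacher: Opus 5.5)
Your preferred route is the paper's own proof: apply Proposition~\ref{prop:serre_under_morphism} to \(j\) to get \(j_\ast\S_{\punct{\Y}}\simeq\Dmat(j_\ast\O_{\punct{\Y}})\), apply \(\Dmat\) to the triangle \(\localcoh\O_\Y\to\O_\Y\to j_\ast\O_{\punct{\Y}}\), use Theorem~\ref{thm:matlis_local_duality}, and then apply \(j^\ast\). The only difference is the order of steps: the paper applies \(j^\ast\) before computing the fibre, so \(j^\ast\S_\Y\simeq 0\) removes the first term, and you then never need to check that the dualised map is the canonical \(\localcoh\Psi(\omega_\Y)\to\Psi(\omega_\Y)\).
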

  \begin{proof}
    By \Cref{prop:serre_under_morphism}, \(j_\ast \S_{\punct{\Y}} \simeq \Dmat(j_\ast \O_{\punct{\Y}})\).
    Hence, \(\S_{\punct{\Y}} \simeq j^\ast \Dmat(j_\ast \O_{\punct{\Y}})\). Notice that
    \begin{displaymath}
      j^\ast \Dmat(j_\ast \O_{\punct{\Y}}) \simeq j^\ast\ker(\Dmat(\O_\Y) \to \Dmat(\localcoh \O_{BG}))
      \simeq \ker(j^\ast \S_\Y \to j^\ast \Dmat(\localcoh \O_{BG})).
    \end{displaymath}
    However, since \(\S_\Y \simeq \localcoh \Psi(\omega_\Y)\) by \Cref{thm:serre_functor},
    we have \(j^\ast \S_\Y \simeq 0\) and thus \(\S_{\punct{\Y}} \simeq j^\ast\Dmat(\localcoh \O_{BG})[-1]\).
    From \Cref{thm:matlis_local_duality}, \(j^\ast\Dmat(\localcoh \O_{BG})[-1] \simeq \Psi(\omega_{\punct{\Y}})[\dim G -1]\).
  \end{proof}

  \begin{rmk}
    Note that this recovers classical Serre duality for projective varieties by taking \(\Spec A\)
    to be the affine cone and \(G = \G_m\).
  \end{rmk}

  \bibliographystyle{unsrt}
  \bibliography{refs}

\end{document}